\numberwithin{equation}{section}
\theoremstyle{plain}
\newtheorem{theorem}{Theorem}[section]
\newtheorem{example}{Example}[section]
\newtheorem{lemma}[theorem]{Lemma}
\theoremstyle{definition}
\theoremstyle{remark}
\newtheorem*{remark}{Remark}
\DeclareMathOperator{\tr}{tr}
\newcommand{\ie}{\textit{i.e.}}
\newcommand{\ud}{\,\mathrm{d}}
\newcommand{\RR}{\mathbb{R}}
\newcommand{\Or}{\mathcal{O}}
\newcommand{\bd}[1]{\boldsymbol{#1}}
\newcommand{\veps}{\varepsilon}
\newcommand{\abs}[1]{\lvert#1\rvert}
\newcommand{\norm}[1]{\lVert#1\rVert}
\newcommand{\I}{\imath}
\renewcommand{\Im}{\mathfrak{Im}}
\newcommand{\barint}{\kern4pt \raise3.4pt\hbox{\vrule height.6pt
    width7pt} \kern-11pt \int}
\newcommand{\FGA}{\mathrm{FGA}}
\newcommand{\GBM}{\mathrm{GBM}}
\begin{document}

\title[Frozen Gaussian approximation]{Frozen Gaussian
  approximation for high frequency wave propagation}

\author{Jianfeng Lu}
\address{Department of Mathematics \\
  Courant Institute of Mathematical Sciences \\
  New York University \\
  New York, NY 10012 \\
  email: jianfeng@cims.nyu.edu }

\author{Xu Yang}
\address{Program in Applied and Computational Mathematics \\
  Princeton University \\
  Princeton, NJ 08544 \\
  email: xuyang@math.princeton.edu }

\date{August 8, 2010. {\it Revised} : November 11, 2010}

\thanks{Both authors are grateful to Weinan E for his inspiration and
  helpful suggestions and discussions that improve the
  presentation. J.L.~would like to thank Lexing Ying for stimulating
  discussions and for providing preprints \cites{QiYi:app1, QiYi:app2}
  before publication.  Part of the work was done during J.L.'s visits
  to Institute of Computational Mathematics and Scientific/Engineering
  Computing of Chinese Academy of Sciences and Peking University, and
  X.Y.'s visits to Tsinghua University and Peking University. We
  appreciate their hospitality. We thank anonymous referees for their
  valuable suggestions and remarks. X.Y. was partially supported by
  the DOE grant DE-FG02-03ER25587, the NSF grant DMS-0708026 and the
  AFOSR grant FA9550-08-1-0433.}

\begin{abstract}
  We propose the frozen Gaussian approximation for computation of high
  frequency wave propagation. This method approximates the solution to
  the wave equation by an integral representation.  It provides a
  highly efficient computational tool based on the asymptotic analysis
  on phase plane. Compared to geometric optics, it provides a valid
  solution around caustics. Compared to the Gaussian beam method, it
  overcomes the drawback of beam spreading. We give several numerical
  examples to verify that the frozen Gaussian approximation performs
  well in the presence of caustics and when the Gaussian beam
  spreads. Moreover, it is observed numerically that the frozen Gaussian 
  approximation exhibits better accuracy than the Gaussian beam method.
\end{abstract}

\maketitle

\section{Introduction}

We are interested in developing efficient numerical methods for high
frequency wave propagation. For simplicity and clarity we take the
following linear scalar wave equation to present the idea,
\begin{equation}\label{eq:wave}
  \partial_t^2 u - c^2(\bd{x}) \Delta u = 0,\qquad \bd{x}\in\RR^d,
\end{equation}
with WKB initial conditions,
\begin{equation}\label{eq:WKBini}
  \begin{cases}
    u_0(\bd{x}) = A_0(\bd{x}) e^{\frac{\I}{\veps} S_0(\bd{x})}, \\
    \partial_t u_0(\bd{x}) = \frac{1}{\veps} B_0(\bd{x})
    e^{\frac{\I}{\veps} S_0(\bd{x})},
  \end{cases}
\end{equation}
where $u$ is the wave field, $d$ is the dimensionality and
$\I=\sqrt{-1}$ is the imaginary unit. We assume that the local wave
speed $c(\bd{x})$ is a smooth function. The small parameter $\veps \ll
1$ characterizes the high frequency nature of the wave. The proposed
method can be generalized to other types of wave equations
\cite{LuYang:MMS}.

Numerical computation of high frequency wave propagation is an
important problem arising in many applications, such as
electromagnetic radiation and scattering, seismic and acoustic waves
traveling, just to name a few. It is a two-scale problem. The {\it
  large} length scale comes from the characteristic size of the
medium, while the {\it small} length scale is the wavelength. The
disparity between the two length scales makes direct numerical
computations extremely hard. In order to achieve accurate results, the
mesh size has to be chosen comparable to the wavelength or even
smaller. On the other hand, the domain size is large so that a huge
number of grid points are needed.

In order to compute efficiently high frequency wave propagation,
algorithms based on asymptotic analysis have been developed. One of
the most famous examples is geometric optics. In the method, it is
assumed that the solution has a form of
\begin{equation}\label{eq:GO}
  u(t,\bd{x})=A(t,\bd{x})e^{\I S(t,\bd{x})/\veps}.
\end{equation}
To the leading order, the phase function $S(t,\bd{x})$ satisfies the
eikonal equation,
\begin{equation}\label{eq:eikonal}
  \abs{\partial_tS}^2-c^2(\bd{x})
  \abs{\nabla_{\bd{x}}S}^2=0,
\end{equation}
and the amplitude $A(t,\bd{x})$ satisfies the transport equation,
\begin{equation*}
\partial_tA-c^2(\bd{x})\frac{\nabla_{\bd{x}}S}{\partial_t
S}\cdot\nabla_{\bd{x}}A+\frac{\bigl(\partial_t^2S-c^2(\bd{x})\Delta
S\bigr)}{2\partial_t S}A=0.
\end{equation*}
The merit of geometric optics is that it only solves the macroscopic
quantities $S(t,\bd{x})$ and $A(t,\bd{x})$ which are
$\veps$-independent. Computational methods based on the geometric
optics are reviewed in \cites{EnRu:03, Ru:07}.

However, since the eikonal equation \eqref{eq:eikonal} is of
Hamilton-Jacobi type, the solution of \eqref{eq:eikonal} becomes
singular after the formation of caustics. At caustics, the
approximate solution of geometric optics is invalid since the amplitude
$A(t,\bd{x})$ blows up. To overcome this problem, Popov introduced
Gaussian beam method in \cite{Po:82}. The single beam solution of
the Gaussian beam method has a similar form to geometric optics,
\[ u(t,\bd{x})=A(t,\bd{y})e^{\I \tilde{S}(t,\bd{x},\bd{y})/\veps}.\]
The difference lies in that the Gaussian beam method uses a {\it
complex} phase function,
\begin{equation}\label{eq:GBphs}
  \tilde{S}(t,\bd{x},\bd{y})=S(t,\bd{y})+\bd{p}(t,\bd{y})\cdot(\bd{x}-\bd{y})+
  \frac{1}{2}(\bd{x}-\bd{y})\cdot
  M(t,\bd{y})(\bd{x}-\bd{y}),
\end{equation}
where $S\in\RR,\;\bd{p}\in\RR^d,\;M\in\mathbb{C}^{d\times d}$. The
imaginary part of $M$ is chosen to be positive definite so that the
solution decays exponentially away from $\bd{x}=\bd{y}$, where
$\bd{y}$ is called the beam center. This makes the solution a Gaussian
function, and hence the method was named the Gaussian beam method.  If
the initial wave is not in a form of single beam, one can approximate
it by using a number of Gaussian beams. The validity of this
construction at caustics was analyzed by Ralston in \cite{Ra:82}.
Recently, there have been a series of numerical studies including both
the Lagrangian type \cites{TaQiRa:07, Ta:08, MoRu:app, QiYi:app1,
  QiYi:app2} and the Eulerian type \cites{LeQiBu:07, LeQi:09,
  JiWuYa:08, JiWuYaHu:10, JiWuYa:10, JiWuYa:11, LiRa:09, LiRa:10}.

The construction of Gaussian beam approximation is based on the
truncation of the Taylor expansion of $\tilde{S}$ around the beam
center $\bd{y}$ up to the quadratic term, hence it loses accuracy when
the width of the beam becomes large, \ie, when the imaginary part of
$M(t, \bd{y})$ in \eqref{eq:GBphs} becomes small so that the Gaussian
function is not localized any more. This happens for example when the
solution of the wave equation spreads (the opposite situation of
forming caustics). This is a severe problem in general, as shown by
examples in Section \ref{sec:numer}. One could overcome the problem of
spreading of beams by doing reinitialization once in a while, see
\cites{QiYi:app1, QiYi:app2}.  This increases the computational
complexity especially when beams spread quickly.

Therefore a method working in both scenario of spreading and caustics
is required. The main idea of the method proposed in the current work
is to use Gaussian functions with fixed widths, instead of using those
that might spread over time, to approximate the wave solution. That is
why this type of method is called frozen Gaussian approximation (FGA).
Despite its superficial similarity with the Gaussian beam method
(GBM), it is different at a fundamental level.  FGA is based on phase
plane analysis, while GBM is based on the asymptotic solution to a
wave equation with Gaussian initial data.  In FGA, the solution to the
wave equation is approximated by a superposition of Gaussian functions
living in the phase space, and each function is {\it not} necessarily
an asymptotic solution, while GBM uses Gaussian functions (named as
beams) in the physical space, with each individual beam being an
asymptotic solution to the wave equation. The main advantage of FGA
over GBM is that the problem of beam spreading no longer
exists.\footnote{Divergence is still an issue for the Lagrangian
  approach, one needs to work in the Eulerian framework to completely
  solve the problem, which is considered in \cite{LuYang:MMS}.}
Besides, numerically we observe that FGA has better accuracy than
GBM when keeping the same order of terms in asymptotic series.
On the other hand, the solution given by FGA is asymptotically
accurate around caustics where geometric optics breaks down.

Our work is motivated by the chemistry literature on the propagation
of time dependent Schr\"odinger equation, where the spreading of
solution is a common phenomenon, for example, in the dynamics of a
free electron. In \cite{He:81}, Heller introduced frozen Gaussian
wavepackets to deal with this issue, but it only worked for a short
time propagation of order $\Or(\hbar)$ where $\hbar$ is the Planck
constant. To make it valid for longer time of order $\Or(1)$, Herman
and Kluk proposed in \cite{HeKl:84} to change the weight of Gaussian
packets by adding so-called Herman-Kluk prefactor. Integral
representation and higher order approximations were developed by Kay
in \cite{Ka:94} and \cite{Ka:06}. Recently, the semiclassical
approximation underlying the method was analyzed rigorously by Swart
and Rousse in \cite{SwRo:09} and also Robert in \cite{Ro:09}. We
generalize their ideas for propagation of high frequency waves,
aiming at developing an efficient computational method. We decompose
waves into several branches of propagation, and each of them is
approximated using Gaussian functions on phase plane. Their
centers follow different Hamiltonian dynamics for different
branches. Their weight functions, which are analogous to the
Herman-Kluk prefactor, satisfy new evolution equations derived from
asymptotic analysis.

The rest of paper is organized as follows. In Section \ref{sec:formu},
we state the formulations and numerical algorithm of the frozen
Gaussian approximation. In Section \ref{sec:asymder}, we provide
asymptotic analysis to justify the formulations introduced in Section
\ref{sec:formu}. The numerical examples are given in Section
\ref{sec:numer} to verify the accuracy and to compare the frozen
Gaussian approximation (FGA) with the Gaussian beam method (GBM). In
Section \ref{sec:conclusion}, we discuss the efficiency of FGA in comparison
with GBM and higher order GBM, with some comments on the phenomenon of 
error cancellation, and we give some conclusive remarks in the end.

\section{Formulation and algorithm}\label{sec:formu}

In this section we present the basic formulation and the main
algorithm of the frozen Gaussian approximation (FGA), and leave the
derivation to the next section.

\subsection{Formulation}

FGA approximates the solution to the wave equation \eqref{eq:wave}
by the integral representation,
\begin{equation}\label{eq:ansatz}
  \begin{aligned}
    u^{\FGA}(t, \bd{x}) & = \frac{1}{(2\pi \veps)^{3d/2}} \int_{\RR^{3d}}
    a_+(t, \bd{q}, \bd{p}) e^{\frac{\I}{\veps} \Phi_+(t, \bd{x},
      \bd{y}, \bd{q}, \bd{p})} u_{+,0}(\bd{y}) \ud \bd{y} \ud \bd{p}
    \ud \bd{q} \\
    & + \frac{1}{(2\pi \veps)^{3d/2}} \int_{\RR^{3d}} a_-(t, \bd{q},
    \bd{p}) e^{\frac{\I}{\veps} \Phi_-(t, \bd{x}, \bd{y}, \bd{q} ,
      \bd{p})} u_{-,0}(\bd{y}) \ud \bd{y} \ud \bd{p} \ud \bd{q},
  \end{aligned}
\end{equation}
where $u_{\pm, 0}$ are determined by the initial value,
\begin{equation}\label{eq:ini_2branch}
  u_{\pm,0}(\bd{x}) = A_{\pm}(\bd{x}) e^{\frac{\I}{\veps} S_0(\bd{x})},
\end{equation}
with
\begin{equation*}
  A_{\pm}(\bd{x}) = \frac{1}{2} \biggl( A_0(\bd{x}) \pm
  \frac{\I B_0(\bd{x})}{ c(\bd{x}) \abs{ \partial_{\bd{x}} S_0(\bd{x})}}
  \biggr).
\end{equation*}
The equation \eqref{eq:ansatz} implies that the solution consists of
two branches (``$\pm$'').

In \eqref{eq:ansatz}, $\Phi_{\pm}$ are the phase functions given by
\begin{multline}\label{eq:phi}
  \Phi_{\pm}(t, \bd{x}, \bd{y}, \bd{q}, \bd{p}) = \bd{P}_{\pm}(t,
  \bd{q}, \bd{p}) \cdot ( \bd{x} -
  \bd{Q}_{\pm}(t, \bd{q}, \bd{p})) - \bd{p} \cdot ( \bd{y} - \bd{q} ) \\
  + \frac{\I}{2} \abs{\bd{x} - \bd{Q}_{\pm}(t, \bd{q}, \bd{p})}^2 +
  \frac{\I}{2} \abs{\bd{y} - \bd{q}}^2.
\end{multline}
Given $\bd{q}$ and $\bd{p}$ as parameters, the evolution of
$\bd{Q}_{\pm}$ and $\bd{P}_{\pm}$ are given by the equation of
motion corresponding to the Hamiltonian $H_{\pm} = \pm
c(\bd{Q}_{\pm}) \abs{\bd{P}_{\pm}}$,
\begin{equation}\label{eq:characline}
  \begin{cases}
    \displaystyle
    \frac{\ud \bd{Q}_{\pm}}{\ud t}  = \partial_{\bd{P}_{\pm}} H_{\pm} = \pm c
    \frac{\bd{P}_{\pm}}{\abs{\bd{P}_{\pm}}},\\
    \displaystyle
    \frac{\ud \bd{P}_{\pm}}{\ud t} = - \partial_{\bd{Q}_{\pm}} H_{\pm}
    = \mp \partial_{\bd{Q}_{\pm}} c \abs{\bd{P}_{\pm}},
  \end{cases}
\end{equation}
with the initial conditions $\bd{Q}_{\pm}(0, \bd{q}, \bd{p}) =
\bd{q}$ and $\bd{P}_{\pm}(0, \bd{q}, \bd{p}) = \bd{p}$. The
evolution equation of $a_{\pm}$ is given by
\begin{equation}\label{eq:partialta}
\begin{aligned}
  \frac{\ud a_{\pm}}{\ud t} &= \pm\frac{a_{\pm}}{2} \Bigl(
  \frac{\bd{P}_{\pm}}{\abs{\bd{P}_{\pm}}}\cdot
  \partial_{\bd{Q}_{\pm}} c - \frac{(d-1)\I}{\abs{\bd{P}_{\pm}}} c \Bigr) \\
  &\pm \frac{a_{\pm}}{2} \tr\biggl( Z_{\pm}^{-1} \partial_{\bd{z}} \bd{Q}_{\pm} \Bigl( 2
  \frac{\bd{P}_{\pm}}{\abs{\bd{P}_{\pm}}} \otimes \partial_{\bd{Q}_{\pm}} c \\&\hspace{6em} - \frac{\I
    c}{\abs{\bd{P}_{\pm}}} \Bigl( \frac{\bd{P}_{\pm}\otimes
    \bd{P}_{\pm}}{\abs{\bd{P}_{\pm}}^2} - I \Bigr) - \I
  \abs{\bd{P}_{\pm}}\partial_{\bd{Q}_{\pm}}^2 c \Bigr) \biggr)
\end{aligned}
\end{equation}
with the initial condition,
\begin{equation*}
  a_{\pm}(0, \bd{q}, \bd{p}) = 2^{d/2}.
\end{equation*}
In \eqref{eq:partialta}, $\bd{P}_{\pm}$ and $\bd{Q}_{\pm}$ are
evaluated at $(t, \bd{q}, \bd{p})$, $c$ and $\partial_{\bd{Q}_{\pm}}
c$ are evaluated at $\bd{Q}_{\pm}$, $I$ is the identity matrix, and
we have introduced short hand notations
\begin{equation}\label{eq:op_zZ}
  \partial_{\bd{z}}=\partial_{\bd{q}}-\I\partial_{\bd{p}},
  \qquad
  Z_{\pm}=\partial_{\bd{z}}(\bd{Q}_{\pm}+\I\bd{P}_{\pm}).
\end{equation}
The evolution of the weight $a_{\pm}$ is analogous to the
Herman-Kluk prefactor \cite{HeKl:84}.

\begin{remark}
    1. The equation \eqref{eq:partialta} can be reformulated as
    \begin{equation}\label{eq:a_formu}
    \frac{\ud a_{\pm}}{\ud t}=\pm a_{\pm}\frac{\bd{P}_{\pm}}{\abs{\bd{P}_{\pm}}}\cdot
    \partial_{\bd{Q}_{\pm}} c+\frac{a_{\pm}}{2}
    \tr\left(Z_{\pm}^{-1}\frac{\ud Z_{\pm}}{\ud t} \right).
    \end{equation}
    When $c$ is constant, \eqref{eq:a_formu} has an analytical
    solution $a_{\pm}=(\det Z_{\pm})^{1/2}$ with the branch of square root
    determined continuously in time by the initial value.

    2. $\partial_{\bd{z}}\bd{Q}_{\pm}$ and $\partial_{\bd{z}}\bd{P}_{\pm}$ satisfy the
    following evolution equations
    \begin{align}\label{eq:dzQ}
      &\frac{\ud (\partial_{\bd{z}}\bd{Q}_{\pm})}{\ud
        t}=\pm\partial_{\bd{z}}\bd{Q}_{\pm}
      \frac{\partial_{\bd{Q}_{\pm}}c\otimes\bd{P}_{\pm}}{\abs{\bd{P}_{\pm}}}\pm
      c\partial_{\bd{z}}\bd{P}_{\pm}\left(\frac{I}{\abs{\bd{P}_{\pm}}}-
        \frac{\bd{P}_{\pm}\otimes\bd{P}_{\pm}}{\abs{\bd{P}_{\pm}}^3}
      \right), \\
      &\frac{\ud (\partial_{\bd{z}}\bd{P}_{\pm})}{\ud
        t}=\mp\partial_{\bd{z}}\bd{Q}_{\pm}\partial_{\bd{Q}_{\pm}}^2c\abs{\bd{P}_{\pm}}
      \mp\partial_{\bd{z}}\bd{P}_{\pm}
      \frac{\bd{P}_{\pm}\otimes\partial_{\bd{Q}_{\pm}}c}{\abs{\bd{P}_{\pm}}}.\label{eq:dzP}
    \end{align}
    One can solve \eqref{eq:dzQ}-\eqref{eq:dzP} to get
    $\partial_{\bd{z}}\bd{Q}_{\pm}$ and $\partial_{\bd{z}}\bd{P}_{\pm}$ in
    \eqref{eq:partialta}. This increases the computational cost, but
    avoids the errors of using divided difference to approximate
    derivative.

\end{remark}

Notice that \eqref{eq:ansatz} can be rewritten as
\begin{equation}\label{eq:recons}
  \begin{aligned}
    u^{\FGA}(t, \bd{x}) & = \int_{\RR^{2d}} \frac{a_+}{(2\pi \veps)^{3d/2}} \psi_+
    e^{\frac{\I}{\veps}\bd{P}_+\cdot(\bd{x} - \bd{Q}_+) -
      \frac{1}{2\veps} \abs{\bd{x}
        - \bd{Q}_+}^2} \ud \bd{p} \ud \bd{q} \\
    & + \int_{\RR^{2d}} \frac{a_-}{(2\pi
      \veps)^{3d/2}} \psi_-
      e^{\frac{\I}{\veps}
      \bd{P}_-\cdot(\bd{x} - \bd{Q}_-) - \frac{1}{2\veps} \abs{\bd{x}
        - \bd{Q}_-}^2} \ud \bd{p} \ud \bd{q},
  \end{aligned}
\end{equation}
where
\begin{equation}\label{eq:psi}
  \psi_{\pm}(\bd{q}, \bd{p}) =
   \int_{\RR^d} u_{\pm, 0}(\bd{y})
  e^{- \frac{\I}{\veps} \bd{p}\cdot(\bd{y} - \bd{q}) -
    \frac{1}{2\veps} \abs{\bd{y} - \bd{q}}^2} \ud\bd{y}.
\end{equation}
Therefore, the method first decomposes the initial wave into several
Gaussian functions in phase space, and then propagate the center of
each function along the characteristic lines while keeping the width
of the Gaussian fixed. This vividly explains the name frozen
Gaussian approximation of this method.

The formulation above gives the leading order frozen Gaussian
approximation with an error of $\Or(\veps)$. It is not hard to
obtain higher order approximations by the asymptotics presented in
Section \ref{sec:asymder}. We will focus mainly on the leading order
approximation in this paper and leave the higher order corrections
and rigorous numerical analysis to future works.

\subsection{Algorithm}

We first give a description of the overall algorithm. To construct the
frozen Gaussian approximation on a mesh of $\bd{x}$, one needs to
compute the integral \eqref{eq:recons} numerically with a mesh of
$(\bd{q},\bd{p})$. This will relate to the numerical computation of
\eqref{eq:psi} with a mesh of $\bd{y}$. Hence three different meshes
are needed in the algorithm. Moreover, the stationary phase
approximation implies that $\psi_{\pm}$ in \eqref{eq:psi} is localized
around the submanifold $\bd{p}=\nabla_{\bd{q}}S_0(\bd{q})$ on phase
plane for WKB initial conditions \eqref{eq:WKBini} when $\veps$ is
small. This means we only need to put the mesh grids of $\bd{p}$
around $\nabla_{\bd{q}}S_0(\bd{q})$ initially to get a good
approximation of the initial value. A one-dimensional example is given
to illustrate this localization property of $\psi_{\pm}$ in Figure
\ref{fig:meshpq} (left). The associated mesh grids are shown in Figure
\ref{fig:meshpq} (right).

\begin{figure}[htp]
\begin{tabular}{cc}
\resizebox{2.3in}{!}{\includegraphics{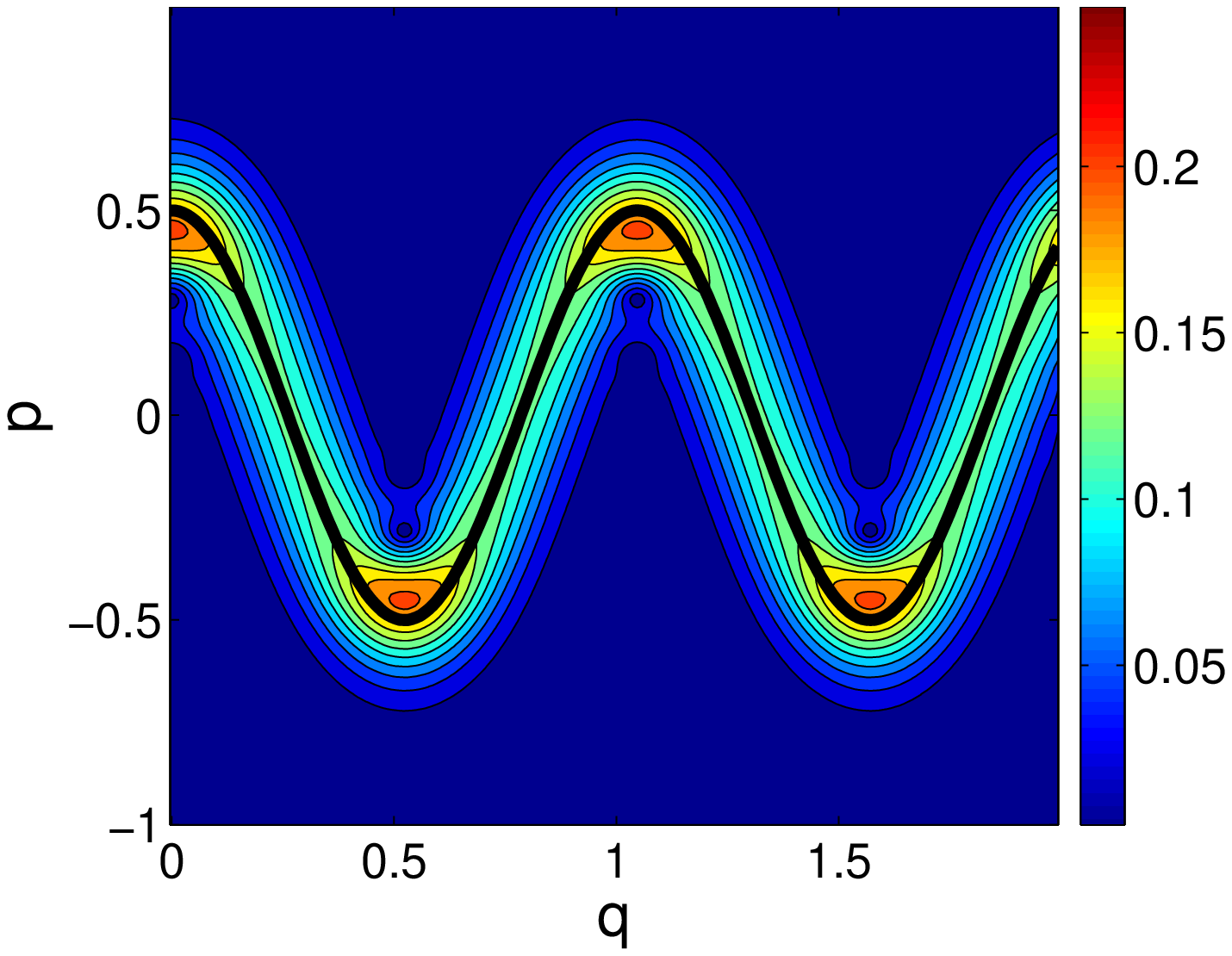}} &
\resizebox{2.3in}{!}{\includegraphics{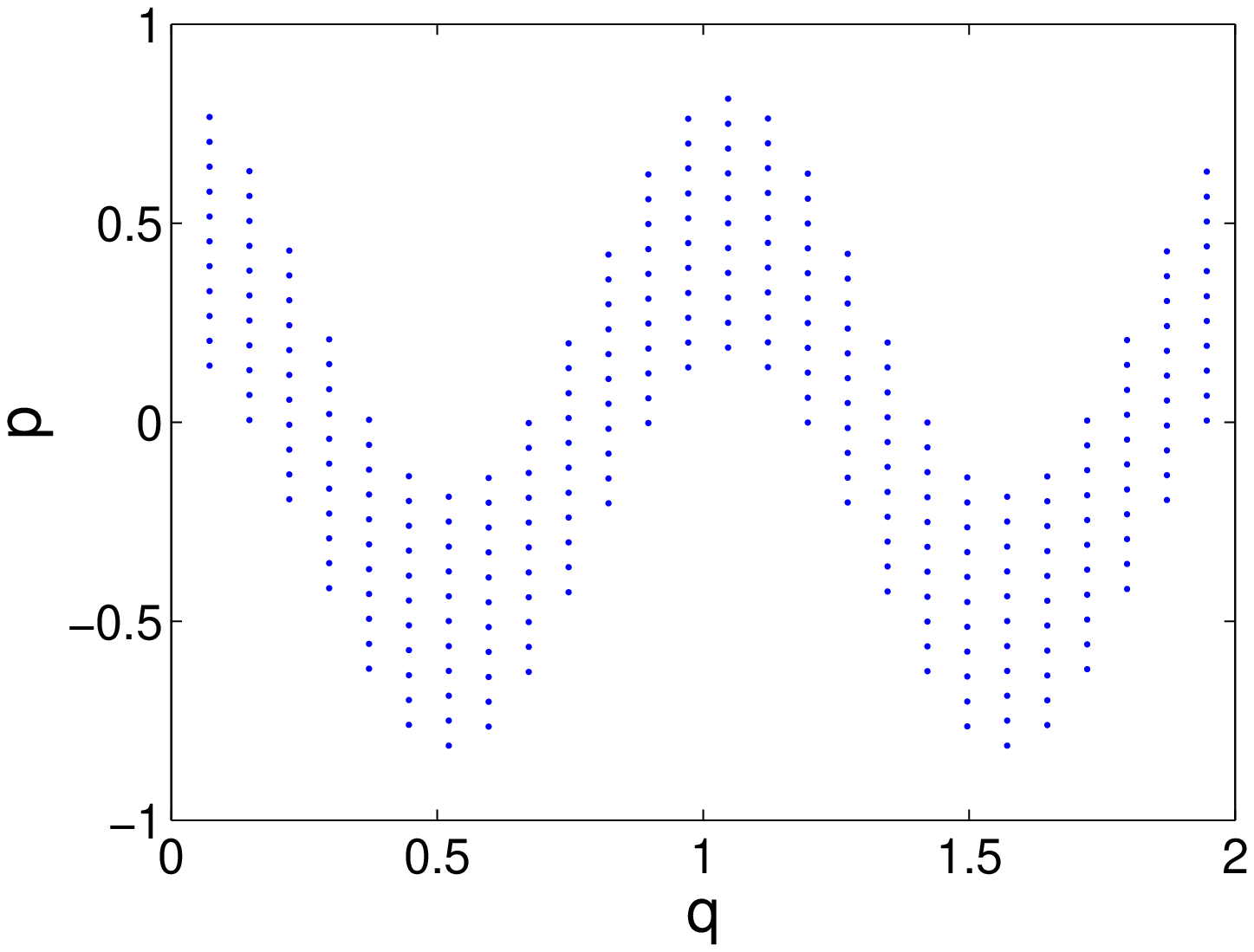}}
\end{tabular}
\caption{Left: an illustration of the localization of $\psi_+$ on
$(q,p)$ domain for
$u_{+,0}(y)=\exp\left(\I\frac{\sin(6y)}{12\veps}\right)$,
$\veps=1/128$; the black solid curve is $p=\cos(6q)/2$. Right: the
corresponding mesh grids of $(q,p)$.}\label{fig:meshpq}
\end{figure}

Next we describe in details all the meshes used in the algorithm.
\begin{enumerate}
\item Discrete mesh of $(\bd{q},\bd{p})$ for initializing $\bd{Q},\bd{P}$.
Denote $\bd{\delta q}=(\delta q_1,\cdots,\delta q_d)$ and
$\bd{\delta p}=(\delta p_1,\cdots,\delta p_d)$ as the mesh size.
Suppose $\bd{q}^0=(q^0_1,\cdots,q^0_d)$ is the starting point, then
the mesh grids $\bd{q^k}$, $\bd{k}=(k_1,\cdots,k_d)$, are defined as
\[\bd{q^k}=\bigl(q^0_1+(k_1-1)\delta q_1,\cdots, q^0_d+(k_d-1)\delta q_d\bigr),\]
where $k_j=1,\cdots,N_q$ for each $j\in\{1,\cdots,d\}$.

The mesh grids $\bd{p^{k,\ell}}$,
$\bd{\ell}=(\ell_1,\cdots,\ell_d)$, are defined associated with the
mesh grids $\bd{q^k}$,
\[\bd{p^{k,\ell}}=\bigl(\partial_{q_1}S_0(\bd{q^k})+\ell_1\delta
p_1,\cdots,
\partial_{q_d}S_0(\bd{q^k})+\ell_d \delta p_d\bigr),\] where
$\ell_j=-N_p,\cdots,N_p$ for each $j\in\{1,\cdots,d\}$.

\item Discrete mesh of $\bd{y}$ for evaluating $\psi_{\pm}$ in
\eqref{eq:psi}. $\bd{\delta y}=(\delta y_1,\cdots,\delta y_d)$ is
the mesh size. Denote $\bd{y}^0=(y^0_1,\cdots,y^0_d)$ as the
starting point. The mesh grids $\bd{y^m}$ are,
$\bd{m}=(m_1,\cdots,m_d)$,
\[\bd{y^m}=\bigl(y^0_1+(m_1-1)\delta y_1,\cdots, y^0_d+(m_d-1)\delta y_d\bigr),\]
where $m_j=1,\cdots,N_y$ for each $j\in\{1,\cdots,d\}$.

\item Discrete mesh of $\bd{x}$ for reconstructing the final solution.
$\bd{\delta x}=(\delta x_1,\cdots,\delta x_d)$ is the mesh size.
Denote $\bd{x}^0=(x^0_1,\cdots,x^0_d)$ as the starting point. The
mesh grids $\bd{x^n}$ are, $\bd{n}=(n_1,\cdots,n_d)$,
\[\bd{x^n}=\bigl(x^0_1+(n_1-1)\delta x_1,\cdots, x^0_d+(n_d-1)\delta x_d\bigr),\]
where $n_j=1,\cdots,N_x$ for each $j\in\{1,\cdots,d\}$.

\end{enumerate}

With the preparation of the meshes, we introduce the algorithm as
follows.

\begin{itemize}
\item[Step 1.] Decompose the initial conditions \eqref{eq:WKBini} into two
  branches of waves according to \eqref{eq:ini_2branch}.

\item[Step 2.] Compute the weight function $\psi_{\pm}$ by
  \eqref{eq:psi} for $(\bd{Q},\bd{P})$ initialized at $(\bd{q^{k}},\bd{p^{k,\ell}})$,
  \begin{multline}\label{eq:discpsi}
    \psi_{\pm}(\bd{q^{k}},\bd{p^{k,\ell}})=\sum_{\bd{m}}
    e^{\frac{\I}{\veps}(- \bd{p^{k,\ell}} \cdot ( \bd{y^m} - \bd{q^{k}}
      ) +\frac{\I}{2} \abs{\bd{y^{m}} - \bd{q^{k}}}^2)}\\\times
    u_{\pm,0}(\bd{y^m}) r_{\theta}(\abs{\bd{y^{m}} -
      \bd{q^{k}}})\delta y_1\cdots \delta y_d,
  \end{multline}
  where $r_{\theta}$ is a cutoff function such that $r_{\theta} = 1$
  in the ball of radius $\theta>0$ centered at origin and $r_{\theta}
  = 0$ outside the ball.

\item[Step 3.] Solve \eqref{eq:characline}-\eqref{eq:partialta} with
  the initial conditions
  \begin{align*}
    & \bd{Q}_{\pm}(0,\bd{q^{k}},\bd{p^{k,\ell}}) = \bd{q^{k}},\qquad
    \bd{P}_{\pm}(0,\bd{q^{k}},\bd{p^{k,\ell}}) =
    \bd{p^{k,\ell}},\\
    & a_{\pm}(0,\bd{q^{k}},\bd{p^{k,\ell}})=2^{d/2},
  \end{align*}
  by standard numerical integrator for ODE, for example the
  fourth-order Runge-Kutta scheme.  Denote the numerical solutions as
  $(\bd{Q}_{\pm}^{\bd{k,\ell}},\bd{P}_{\pm}^{\bd{k,\ell}})$ and
  $a_{\pm}^{\bd{k,\ell}}$.

\item[Step 4.] Reconstruct the solution by \eqref{eq:recons},
\begin{equation}\label{eq:discFGB}
  \begin{aligned}
    u^{\FGA}(t, \bd{x^{n}}) & = \sum_{\bd{k,\ell}} \biggl(
    \frac{a_+^{\bd{k,\ell}}r_{\theta}^+} {(2\pi \veps)^{3d/2}} \psi_+(\bd{q^{k}},
    \bd{p^{k,\ell}}) e^{\frac{\I}{\veps}
      \bd{P}^{\bd{k,\ell}}_+\cdot(\bd{x}^{\bd{n}} -
      \bd{Q}^{\bd{k,\ell}}_+) - \frac{1}{2\veps} \abs{\bd{x}^{\bd{n}}
        - \bd{Q}^{\bd{k,\ell}}_+}^2}  \\
    &\qquad + \frac{a_-^{\bd{k,\ell}}r_{\theta}^-}{(2\pi \veps)^{3d/2}}
    \psi_-(\bd{q^{k}}, \bd{p^{k,\ell}}) e^{\frac{\I}{\veps}
      \bd{P}^{\bd{k,\ell}}_-\cdot(\bd{x}^{\bd{n}} -
      \bd{Q}^{\bd{k,\ell}}_-) - \frac{1}{2\veps}
      \abs{\bd{x}^{\bd{n}} - \bd{Q}^{\bd{k,\ell}}_-}^2}  \biggr) \\
    & \qquad \times  \delta q_1\cdots \delta q_d \delta
    p_1\cdots \delta p_d,
  \end{aligned}
\end{equation}
where $r_{\theta}^\pm=r_{\theta}(\abs{\bd{x}^{\bd{n}} -
      \bd{Q}^{\bd{k,\ell}}_{\pm}})$.
\end{itemize}

\begin{remark}

  1. In setting up the meshes, we assume that the initial
  condition \eqref{eq:WKBini} either has compact support or decays
  sufficiently fast to zero as $\bd{x}\rightarrow \infty$ so that we
  only need finite number of mesh points in physical space.

  2. The role of the truncation function $r_{\theta}$ is to save
  computational cost, since although a Gaussian function is not
  localized, it decays quickly away from the center. In practice we
  take $\theta=\Or(\sqrt{\veps})$, the same order as the width of each
  Gaussian, when we evaluate \eqref{eq:discpsi} and \eqref{eq:discFGB}
  numerically.

  3. There are two types of errors present in the method. The first
  type comes from the asymptotic approximation to the wave
  equation. This error can {\it not} be reduced unless one includes
  higher order corrections. The other type is the numerical error
  which comes from two sources: one is from the ODE numerical
  integrator; the other is from the discrete approximations of
  integrals \eqref{eq:recons} and \eqref{eq:psi}. It {\it can} be
  reduced by either taking small mesh size and time step or using
  higher order numerical methods.

  4. Note that the assumption that the initial conditions are either
  compactly supported or decay quickly implies that the values on the
  boundary are zero (or close to zero). Then \eqref{eq:discpsi} and
  \eqref{eq:discFGB} are the trapezoidal rules to approximate
  \eqref{eq:psi} and \eqref{eq:recons}. Notice that, due to the
  Gaussian factor, the integrand functions in \eqref{eq:psi} and
  \eqref{eq:recons} are exponentially small unless $\bd{x-Q}$ and
  $\bd{y-q}$ are in the order of $\Or({\veps}^{1/2})$, which implies
  their derivatives with respect to $\bd{y},\;\bd{q},\;\bd{p}$ are of
  the order $\Or(\veps^{-1/2})$. This suggests $\bd{\delta y},\;
  \bd{\delta q},\;\bd{\delta p}$ should be taken as the size of
  $\Or(\sqrt{\veps})$.  Hence $N_y$ and $N_q$ are of order
  $\Or(\veps^{-d/2})$. As illustrated in Figure \ref{fig:meshpq},
  $N_p$ is usually taken as $\Or\left(\frac{\sqrt{\veps}}{\min_j
      \;\delta p_j}\right)$, which is of order $\Or(1)$.
  $N_x$ is not constrained by $\veps$, and is only determined by how
  well represented one wants the final solution.

  5. Step $2$ and $4$ can be expedited by making use of the discrete
  fast Gaussian transform, as in \cites{QiYi:app1, QiYi:app2}.
\end{remark}

\section{Asymptotic derivation}\label{sec:asymder}

We now derive the formulation shown in Section \ref{sec:formu} using
asymptotic analysis.

We start with the following ansatz for the wave equation
\eqref{eq:wave},
\begin{equation}\label{eq:ansatz1}
  \begin{aligned}
    u(t, \bd{x}) & = \frac{1}{(2\pi \veps)^{3d/2}} \int_{\RR^{3d}}
    a_+(t, \bd{q}, \bd{p}) e^{\frac{\I}{\veps} \Phi_+(t, \bd{x},
      \bd{y}, \bd{q}, \bd{p})} u_{+,0}(\bd{y}) \ud \bd{y} \ud \bd{p} \ud \bd{q} \\
    & + \frac{1}{(2\pi \veps)^{3d/2}} \int_{\RR^{3d}} a_-(t, \bd{q},
    \bd{p}) e^{\frac{\I}{\veps} \Phi_-(t, \bd{x}, \bd{y}, \bd{q} ,
      \bd{p})} u_{-,0}(\bd{y}) \ud \bd{y} \ud \bd{p} \ud \bd{q},
  \end{aligned}
\end{equation}
where $\Phi_{\pm}$ are given by
\begin{multline}\label{eq:phi1}
  \Phi_{\pm}(t, \bd{x}, \bd{y}, \bd{q}, \bd{p}) = S_{\pm}(t,
  \bd{q}, \bd{p})+\bd{P}_{\pm}(t,
  \bd{q}, \bd{p}) \cdot ( \bd{x} -
  \bd{Q}_{\pm}(t, \bd{q}, \bd{p})) - \bd{p} \cdot ( \bd{y} - \bd{q} ) \\
  + \frac{\I}{2} \abs{\bd{x} - \bd{Q}_{\pm}(t, \bd{q}, \bd{p})}^2 +
  \frac{\I}{2} \abs{\bd{y} - \bd{q}}^2.
\end{multline}

The initial conditions are taken as
\begin{equation}\label{eq:ini1}
\begin{aligned}
 & \bd{Q}_{\pm}(0,\bd{q},\bd{p}) = \bd{q},\qquad
    \bd{P}_{\pm}(0,\bd{q},\bd{p}) =
    \bd{p},\\
    & S_{\pm}(0,\bd{q},\bd{p})=0,\qquad
    a_{\pm}(0,\bd{q},\bd{p})=2^{d/2}.
\end{aligned}
\end{equation}

The subscript $\pm$ indicates the two branches that correspond to
two different Hamiltonian,
\begin{equation}
  H_+(\bd{Q_+}, \bd{P_+}) = c(\bd{Q_+}) \abs{\bd{P_+}}, \qquad
  H_-(\bd{Q_-}, \bd{P_-}) = -c(\bd{Q_-}) \abs{\bd{P_-}}.
\end{equation}
$\bd{P}_{\pm}$ and $\bd{Q}_{\pm}$ satisfy the equation of motion
given by the Hamiltonian $H_{\pm}$
\begin{equation}\label{eq:characline1}
  \begin{cases}
    \displaystyle
    \partial_t \bd{Q}_{\pm} = \partial_{\bd{P}_{\pm}} H_{\pm} = \pm c
    \frac{\bd{P}_{\pm}}{\abs{\bd{P}_{\pm}}},\\
    \displaystyle
    \partial_t \bd{P}_{\pm} = - \partial_{\bd{Q}_{\pm}} H_{\pm}
    = \mp \partial_{\bd{Q}_{\pm}} c \abs{\bd{P}_{\pm}}.
  \end{cases}
\end{equation}

By plugging \eqref{eq:ansatz1} into \eqref{eq:wave}, the leading
order terms show that the evolution of $S_{\pm}$ simply satisfies
\begin{equation}\label{eq:Spm}
  {\partial_t S_{\pm}}=0.
\end{equation}
This implies $S_{\pm}(t, \bd{q}, \bd{p})=0$. Hence we omit the terms
$S_{\pm}$ in Section \ref{sec:formu} and later calculations.

Before proceeding further, let us state some lemmas that will be
used.
\begin{lemma}\label{lem:FBI}
  For $u \in L^2(\RR^d)$, it holds
  \begin{equation}\label{eq:FBI}
    u(\bd{x}) = \frac{1}{(2\pi \veps)^{3d/2}}
    \int_{\RR^{3d}} 2^{d/2} e^{\frac{\I}{\veps} \Phi_{\pm}(0, \bd{x}, \bd{y}, \bd{q}, \bd{p})}
    u(\bd{y}) \ud \bd{y}\ud \bd{p} \ud \bd{q}.
  \end{equation}
\end{lemma}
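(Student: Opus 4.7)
The plan is to reduce the claimed identity at $t=0$ to a Fourier inversion computation, since at time zero the phase $\Phi_\pm$ collapses to something quite simple. Specifically, I would first substitute the initial conditions $\bd{Q}_\pm(0,\bd{q},\bd{p})=\bd{q}$ and $\bd{P}_\pm(0,\bd{q},\bd{p})=\bd{p}$ into \eqref{eq:phi1} (with $S_\pm(0)=0$) to get
\begin{equation*}
  \Phi_\pm(0,\bd{x},\bd{y},\bd{q},\bd{p}) = \bd{p}\cdot(\bd{x}-\bd{y}) + \frac{\I}{2}\abs{\bd{x}-\bd{q}}^2 + \frac{\I}{2}\abs{\bd{y}-\bd{q}}^2,
\end{equation*}
so that the $\pm$ cases coincide and the integrand factors cleanly into a plane wave in $\bd{p}$ times two Gaussians in $\bd{q}$.

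Next I would perform the integrations in the order $\bd{p}$, then $\bd{y}$, then $\bd{q}$. The integral in $\bd{p}$ of $e^{\I\bd{p}\cdot(\bd{x}-\bd{y})/\veps}$ produces $(2\pi\veps)^d\delta(\bd{x}-\bd{y})$, which after the $\bd{y}$ integration leaves
\begin{equation*}
  \frac{2^{d/2}}{(2\pi\veps)^{d/2}} u(\bd{x}) \int_{\RR^d} e^{-\abs{\bd{x}-\bd{q}}^2/\veps}\ud\bd{q}.
\end{equation*}
The Gaussian integral in $\bd{q}$ evaluates to $(\pi\veps)^{d/2}$, and the prefactors $2^{d/2}(\pi\veps)^{d/2}/(2\pi\veps)^{d/2}$ cancel exactly to $1$, recovering $u(\bd{x})$. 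This confirms the normalization constant $2^{d/2}$ on the left-hand side of \eqref{eq:FBI} is the right one, which is also the reason for the initial value $a_\pm(0,\bd{q},\bd{p})=2^{d/2}$ in \eqref{eq:ini1}.

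The main point requiring care is that this heuristic order of integrations is not absolutely convergent, so I would justify it by first assuming $u\in\ms{S}(\RR^d)$ and interpreting the $\bd{p}$ integral as the Fourier transform of a Schwartz function in $\bd{y}$ (equivalently, performing the $\bd{y}$ and $\bd{p}$ integrations jointly as the FBI/Bargmann transform followed by its adjoint). Under that reading the three computations above are rigorous, and a density argument then extends the identity to all of $L^2(\RR^d)$, since the composed map $u\mapsto$ (right-hand side of \eqref{eq:FBI}) is bounded on $L^2$ — indeed it is precisely the composition of the FBI transform with its inverse, a standard resolution-of-identity for Gaussian wavepackets. The only real obstacle is bookkeeping of the $\veps$-dependent constants and the phase-space normalization, but the calculation above shows they balance.
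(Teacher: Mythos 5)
Your proposal is correct and follows the same route as the paper: evaluate $\Phi_{\pm}$ at $t=0$ using the initial data for $\bd{Q}_{\pm},\bd{P}_{\pm},S_{\pm}$, observe the phase reduces to $\bd{p}\cdot(\bd{x}-\bd{y})+\frac{\I}{2}\abs{\bd{x}-\bd{q}}^2+\frac{\I}{2}\abs{\bd{y}-\bd{q}}^2$, and recognize the identity as the standard Gaussian wave-packet (FBI) resolution of identity. The paper simply cites this fact (Folland), whereas you verify it explicitly via the $\bd{p}$-then-$\bd{y}$-then-$\bd{q}$ computation plus a Schwartz-class/density argument; the constants balance as you claim, so this is a more detailed rendering of the same proof.
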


\begin{proof}
  By the initial conditions \eqref{eq:ini1},
  \begin{equation}
    \Phi_{\pm}(0, \bd{x}, \bd{y}, \bd{q}, \bd{p})
    = \bd{p} \cdot ( \bd{x} - \bd{q} ) - \bd{p} \cdot ( \bd{y} - \bd{q} )
    + \frac{\I}{2} \abs{ \bd{x} - \bd{q} }^2
    + \frac{\I}{2} \abs{ \bd{y} - \bd{q} }^2.
  \end{equation}
  Therefore, \eqref{eq:FBI} is just the standard wave packet
  decomposition in disguise (see for example \cite{Fo:89}).
\end{proof}

The proof of the following important lemma follows the one of Lemma
$3$ in \cite{SwRo:09}.
\begin{lemma}\label{lem:veps1}
  For any vector $\bd{a}(\bd{y},\bd{q},\bd{p})$ and matrix
  $M(\bd{y},\bd{q},\bd{p})$ in Schwartz class viewed as functions of
  $(\bd{y},\bd{q},\bd{p})$, we have
  \begin{equation}\label{eq:con1}
    \bd{a}(\bd{y},\bd{q},\bd{p})
    \cdot (\bd{x} - \bd{Q})  \sim - \veps \partial_{z_k} ( a_j
    Z_{jk}^{-1} ),
  \end{equation}
  and
  \begin{equation}\label{eq:con2}
    (\bd{x} - \bd{Q})\cdot M(\bd{y},\bd{q},\bd{p}) (\bd{x} - \bd{Q})
    \sim \veps \partial_{z_l} Q_j M_{jk} Z_{kl}^{-1}   + \veps^2
    \partial_{z_m} \bigl( \partial_{z_l} (M_{jk} Z_{kl}^{-1} ) Z_{jm}^{-1}
    \bigr),
  \end{equation}
  where Einstein's summation convention has been used.

  Moreover, for multi-index $\alpha$ that $\abs{\alpha} \geq 3$,
  \begin{equation}\label{eq:con3}
    (\bd{x} - \bd{Q})^{\alpha} \sim \Or(\veps^{\abs{\alpha}-1}).
  \end{equation}
  Here we use the notation $ f \sim g $ to mean that
  \begin{equation}
    \int_{\RR^{3d}} f e^{\frac{\I}{\veps} \Phi_{\pm}} \ud \bd{y} \ud \bd{p} \ud \bd{q}
    = \int_{\RR^{3d}} g e^{\frac{\I}{\veps} \Phi_{\pm}} \ud \bd{y} \ud \bd{p} \ud
    \bd{q}.
  \end{equation}
\end{lemma}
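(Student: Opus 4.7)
The plan is to express each factor of $\bd{x} - \bd{Q}_{\pm}$ as $\veps$ times a $\partial_{\bd{z}}$-derivative of the oscillatory exponential $e^{\I\Phi_{\pm}/\veps}$, and then integrate by parts in $(\bd{q}, \bd{p})$ to transfer the derivative onto $\bd{a}$ or $M$, paying one factor of $\veps$ per factor of $(\bd{x}-\bd{Q}_{\pm})$ removed.

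First I compute $\partial_{z_k}\Phi_{\pm}$ directly from \eqref{eq:phi1}, treating $\bd{x}, \bd{y}$ as independent of $(\bd{q},\bd{p})$. The careful construction of $\Phi_{\pm}$ — in particular the terms $-\bd{p}\cdot(\bd{y}-\bd{q})$ and $\frac{\I}{2}|\bd{y}-\bd{q}|^2$ — makes all $(\bd{y}-\bd{q})$-dependent contributions cancel, leaving
\[ \partial_{z_k}\Phi_{\pm} = -\I\,(x_i - Q_{\pm,i})\,\partial_{z_k}(Q_{\pm,i} + \I P_{\pm,i}) + R_k, \]
with $R_k := p_k - P_{\pm,i}\,\partial_{z_k}Q_{\pm,i}$ depending only on $(t,\bd{q},\bd{p})$. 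The coefficient matrix is exactly $-\I Z_{\pm}$ in the notation of \eqref{eq:op_zZ}.

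The key step — and in my view the main obstacle — is to verify $R_k \equiv 0$. At $t=0$, the initial conditions \eqref{eq:ini1} give $\partial_{z_k}Q_{\pm,i}(0) = \delta_{ki}$, so $R_k(0)=0$. For $t>0$ I would differentiate $R_k$ in time: using the Hamilton equations \eqref{eq:characline1} and the fact that each component of $\bd{P}_{\pm}/|\bd{P}_{\pm}|$ is homogeneous of degree zero in $\bd{P}_{\pm}$ (so that the $P_i P_j$ contractions appearing in $P_{\pm,i}\,\partial_{z_k}(c P_{\pm,i}/|\bd{P}_{\pm}|)$ collapse), the contributions coming from $\partial_t \bd{P}_{\pm}$ and from $\partial_t\partial_{z_k}\bd{Q}_{\pm}$ cancel exactly, yielding $\partial_t R_k \equiv 0$. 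An equivalent explanation, which I would mention, is that $R_k$ is the $\partial_{z_k}$-derivative of the classical action $S_{\pm}$ along trajectories, and \eqref{eq:Spm} already forces $S_{\pm}\equiv 0$.

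Once $R_k=0$, the relation inverts cleanly to $x_i - Q_{\pm,i} = \I\,(Z_{\pm}^{-1})_{ij}\,\partial_{z_j}\Phi_{\pm}$. Substituting into the integrand, using $\partial_{z_k}\Phi_{\pm}\cdot e^{\I\Phi_{\pm}/\veps} = -\I\veps\,\partial_{z_k} e^{\I\Phi_{\pm}/\veps}$, and integrating by parts in $\bd{q}, \bd{p}$ gives \eqref{eq:con1}; boundary terms vanish because $\bd{a}$ is Schwartz and, independently, because $e^{\I\Phi_{\pm}/\veps}$ carries the Gaussian decay $e^{-(|\bd{x}-\bd{Q}_{\pm}|^2 + |\bd{y}-\bd{q}|^2)/(2\veps)}$. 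For \eqref{eq:con2} I apply the same trick to one of the two $(\bd{x}-\bd{Q}_{\pm})$ factors; the resulting $\partial_{z_l}$ derivative either hits $M\,Z_{\pm}^{-1}$, leaving a residual single factor $(\bd{x}-\bd{Q}_{\pm})$ that I rewrite via \eqref{eq:con1} to obtain the $\veps^2$ contribution, or it hits the second $(\bd{x}-\bd{Q}_{\pm})$ and yields $-\partial_{z_l}\bd{Q}_{\pm}$, generating the leading $\veps$ term. Finally, \eqref{eq:con3} follows by iteration: each application of the identity strictly decreases the number of $(\bd{x}-\bd{Q}_{\pm})$ factors while introducing a factor of $\veps$, and a simple induction on $|\alpha|$ yields $\Or(\veps^{|\alpha|-1})$.
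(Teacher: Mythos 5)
Your proposal follows essentially the same route as the paper's proof. Your conserved quantity $R_k = p_k - P_i\partial_{z_k}Q_i$ is exactly the complex combination of the paper's two identities $-(\partial_{\bd q}\bd Q)\bd P + \bd p = 0$ and $(\partial_{\bd p}\bd Q)\bd P = 0$, and your verification ($R_k(0)=0$, then $\partial_t R_k=0$ via the Hamilton equations \eqref{eq:characline1} together with the degree-one homogeneity of $H_{\pm}$ in $\bd P$ --- equivalently, the vanishing of the classical action forced by \eqref{eq:Spm}) is the same computation the paper performs. The integration by parts for \eqref{eq:con1}, the two-fold application giving \eqref{eq:con2}, and the induction for \eqref{eq:con3} also coincide with the paper.

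There is, however, one genuine gap: you assert that the identity $\I\partial_{\bd z}\Phi = Z(\bd x - \bd Q)$ ``inverts cleanly,'' but the invertibility of $Z=\partial_{\bd z}(\bd Q + \I\bd P)$ for \emph{all} $t$ is precisely the nontrivial point and cannot be taken for granted: the lemma's conclusion contains $Z^{-1}$, and at caustics the real blocks $\partial_{\bd q}\bd Q$, $\partial_{\bd p}\bd Q$ of the flow Jacobian genuinely degenerate, so invertibility of the complex combination is what makes the whole phase-plane construction survive caustics. The paper closes this by writing
\begin{equation*}
Z = \begin{pmatrix}\I I & I\end{pmatrix} F \begin{pmatrix}-\I I\\ I\end{pmatrix},
\qquad
F = \begin{pmatrix}\partial_{\bd q}\bd Q & \partial_{\bd q}\bd P\\ \partial_{\bd p}\bd Q & \partial_{\bd p}\bd P\end{pmatrix},
\end{equation*}
and computing $ZZ^{\ast} = \begin{pmatrix}\I I & I\end{pmatrix}FF^{\mathrm T}\begin{pmatrix}-\I I\\ I\end{pmatrix} + 2I$, where the extra $2I$ comes from the symplectic identity $FJF^{\mathrm T}=J$ satisfied by the Hamiltonian flow ($J$ the standard symplectic matrix). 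Since the first summand is positive semidefinite, $ZZ^{\ast}\geq 2I$, so $Z$ is invertible for every $t$ and the inversion step, and hence the integration by parts, is justified. You should supply this (or an equivalent) argument; with it, your proof is complete and matches the paper's.
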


\begin{proof}
  Since the proof is exactly the same for the cases of $\Phi_+$ and
  $\Phi_-$, we omit the subscript $\pm$ for simplicity. As $\bd{a}$
  and $M$ are in Schwartz class, all the manipulations below are
  justified.

  Observe that at $t=0$,
  \begin{equation*}
    -(\partial_{\bd{q}}\bd{Q}) \bd{P}+\bd{p}=0,
    \qquad (\partial_{\bd{p}}\bd{Q}) \bd{P}=0.
  \end{equation*}
  Using \eqref{eq:characline1}, we have
  \begin{align*}
    \partial_t\bigl(-(\partial_{\bd{q}}\bd{Q})
    \bd{P}+\bd{p}\bigr)&=-\partial_{\bd{q}}\left(\partial_t \bd{Q}
    \right)\bd{P}-\partial_{\bd{q}}\bd{Q} \partial_t \bd{P}\\
    &=-\partial_{\bd{q}}\left(c\frac{\bd{P}}{\abs{\bd{P}}}
    \right)\bd{P}+\partial_{\bd{q}}\bd{Q}\partial_{\bd{Q}}c\abs{\bd{P}}\\
    &=0.
  \end{align*}
  Analogously we have $\displaystyle
  \partial_t\bigl((\partial_{\bd{p}}\bd{Q}) \bd{P}\bigr)=0$. Therefore
  for all $t>0$,
  \[-(\partial_{\bd{q}}\bd{Q}) \bd{P}+\bd{p}=0,\qquad
  (\partial_{\bd{p}}\bd{Q}) \bd{P}=0.\]

  Then straightforward calculations yield
  \begin{align*}
    & \partial_{\bd{q}}\Phi=(\partial_{\bd{q}}\bd{P}
    -\I\partial_{\bd{q}}\bd{Q})(\bd{x}-\bd{Q})-\I (\bd{y}-\bd{q}), \\
    & \partial_{\bd{p}}\Phi=(\partial_{\bd{p}}\bd{P}
    -\I\partial_{\bd{p}}\bd{Q})(\bd{x}-\bd{Q})- (\bd{y}-\bd{q}),
  \end{align*}
  which implies that
  \begin{equation}
    \I\partial_{\bd{z}}\Phi=Z(\bd{x}-\bd{Q}),
  \end{equation}
  where $\partial_{\bd{z}}$ and $Z$ are defined in \eqref{eq:op_zZ}.
 Note that, $Z$ can be rewritten as
  \begin{equation*}
    Z = \partial_{\bd{z}}( \bd{Q} + \I \bd{P} )
    =
    \begin{pmatrix}
      \I I & I
    \end{pmatrix}
    \begin{pmatrix}
      \partial_{\bd{q}} \bd{Q} & \partial_{\bd{q}} \bd{P} \\
      \partial_{\bd{p}} \bd{Q} & \partial_{\bd{p}} \bd{P}
    \end{pmatrix}
    \begin{pmatrix}
      - \I I \\
      I
    \end{pmatrix},
  \end{equation*}
  where $I$ stands for the $d \times d$ identity matrix. Therefore,
  define
  \begin{equation*}
    F =
    \begin{pmatrix}
      \partial_{\bd{q}} \bd{Q} & \partial_{\bd{q}} \bd{P} \\
      \partial_{\bd{p}} \bd{Q} & \partial_{\bd{p}} \bd{P}
    \end{pmatrix},
  \end{equation*}
  then
  \begin{equation*}
    \begin{aligned}
      Z Z^{\ast} & =
      \begin{pmatrix}
        \I I & I
      \end{pmatrix}
      F
      \begin{pmatrix}
        I  & -\I I \\
        \I I & I
      \end{pmatrix}
      F^{\mathrm{T}}
      \begin{pmatrix}
        - \I I \\
        I
      \end{pmatrix} \\
      & =
      \begin{pmatrix}
        \I I & I
      \end{pmatrix}
      F
      F^{\mathrm{T}}
      \begin{pmatrix}
        - \I I \\
        I
      \end{pmatrix} +
      \begin{pmatrix}
        \I I & I
      \end{pmatrix}
      F
      \begin{pmatrix}
        0 & -\I I \\
        \I I & 0
      \end{pmatrix}
      F^{\mathrm{T}}
      \begin{pmatrix}
        - \I I \\
        I
      \end{pmatrix}  \\
      & =
      \begin{pmatrix}
        \I I & I
      \end{pmatrix}
      F
      F^{\mathrm{T}}
      \begin{pmatrix}
        - \I I \\
        I
      \end{pmatrix} +
      2I.
    \end{aligned}
  \end{equation*}
  In the last equality, we have used the fact that
  \begin{equation*}
    F
    \begin{pmatrix}
      0 & -\I I \\
      \I I & 0
    \end{pmatrix}
    F^{\mathrm{T}} =
    \begin{pmatrix}
      0 & -\I I \\
      \I I & 0
    \end{pmatrix},
  \end{equation*}
  due to the Hamiltonian flow structure. Therefore $ZZ^{\ast}$ is
  positive definite for all $t$, which implies $Z$ is invertible and
  \begin{equation}\label{eq:dzPhi}
    (\bd{x} - \bd{Q}) = \I Z^{-1} \partial_{\bd{z}} \Phi.
  \end{equation}

  Using \eqref{eq:dzPhi}, one has
  \begin{align*}
    \int_{\RR^{3d}} \bd{a} \cdot (\bd{x} - \bd{Q}) e^{\frac{\I}{\veps}
      \Phi} \ud \bd{y} \ud \bd{p} \ud \bd{q} &=\veps\int_{\RR^{3d}}
    {a}_j Z^{-1}_{jk} \left(\frac{\I}{\veps}\partial_{z_k}\Phi\right)
    e^{\frac{\I}{\veps} \Phi} \ud \bd{y} \ud \bd{p} \ud \bd{q} \\
    &=-\veps \int_{\RR^{3d}} \partial_{z_k}\big({a}_j Z^{-1}_{jk}
    \big) e^{\frac{\I}{\veps} \Phi} \ud \bd{y} \ud \bd{p} \ud \bd{q},
  \end{align*}
  where the last equality is obtained from integration by parts. This
  proves \eqref{eq:con1}.

  Making use of \eqref{eq:con1} twice produces \eqref{eq:con2}
  \begin{align*}
    (\bd{x} - \bd{Q})\cdot M (\bd{x} - \bd{Q})
    & = (x-Q)_jM_{jk}(x-Q)_k \\
    & \sim -\veps \partial_{z_l}\bigl((x-Q)_jM_{jk}Z^{-1}_{kl}
    \bigr) \\
    & = \veps \partial_{z_l} Q_j M_{jk} Z_{kl}^{-1}
    -\veps(x-Q)_j\partial_{z_l}(M_{jk}Z^{-1}_{kl})
    \\
    & \sim \veps \partial_{z_l} Q_j M_{jk} Z_{kl}^{-1} + \veps^2
    \partial_{z_m} \bigl( \partial_{z_l} (M_{jk} Z_{kl}^{-1} ) Z_{jm}^{-1} \bigr).
  \end{align*}

  By induction it is easy to see that \eqref{eq:con3} is true.

\end{proof}

\subsection{Initial value decomposition}

By \eqref{eq:phi1} and \eqref{eq:characline1} we obtain that
\begin{equation}\label{eq:partialtPhi}
  \begin{aligned}
    \partial_t \Phi_{\pm} & = - \bd{P}_{\pm} \cdot \partial_t
    \bd{Q}_{\pm} + (\partial_t \bd{P}_{\pm} -
    \I \partial_t \bd{Q}_{\pm}) \cdot (\bd{x} - \bd{Q}_{\pm}) \\
    & = \mp c \abs{\bd{P}_{\pm}} \mp (\bd{x} - \bd{Q}_{\pm}) \cdot
    \Bigl( \abs{\bd{P}_{\pm}} \partial_{\bd{Q}_{\pm}} c + \I
    \frac{\bd{P}_{\pm}}{\abs{\bd{P}_{\pm}}} c \Bigr),
  \end{aligned}
\end{equation}
and in particular for $t = 0$,
\begin{equation}
  \partial_t \Phi_{\pm}(0, \bd{x}, \bd{y}, \bd{q}, \bd{p})
  = \mp c \abs{\bd{p}} \mp (\bd{x} - \bd{q}) \cdot \Bigl(
    \abs{\bd{p}} \partial_{\bd{q}} c + \I
    \frac{\bd{p}}{\abs{\bd{p}}} c \Bigr).
\end{equation}

The ansatz \eqref{eq:ansatz1} shows that
\begin{equation}\label{eq:initialu}
  \begin{aligned}
    u(0, \bd{x}) & = \frac{1}{(2\pi \veps)^{3d/2}} \int_{\RR^{3d}}
    a_+(0, \bd{q}, \bd{p}) e^{\frac{\I}{\veps} \Phi_+(0, \bd{x},
      \bd{y}, \bd{q}, \bd{p})} u_{+,0}(\bd{y})
    \ud \bd{y} \ud \bd{p} \ud \bd{q} \\
    & + \frac{1}{(2\pi \veps)^{3d/2}} \int_{\RR^{3d}} a_-(0, \bd{p},
    \bd{q}) e^{\frac{\I}{\veps} \Phi_-(0, \bd{x}, \bd{y}, \bd{q} ,
      \bd{p})} u_{-,0}(\bd{y}) \ud \bd{y} \ud \bd{p} \ud \bd{q},
  \end{aligned}
\end{equation}
and
\begin{equation}\label{eq:initialv}
  \begin{aligned}
    \partial_t u(0, \bd{x}) & = \frac{1}{(2\pi \veps)^{3d/2}}
    \int_{\RR^{3d}} \bigl(\partial_t a_+ +
    \frac{\I a_+}{\veps} \partial_t \Phi_+\bigr) e^{\frac{\I}{\veps}
      \Phi_+(0, \bd{x}, \bd{y}, \bd{q}, \bd{p})} u_{+,0}(\bd{y}) \ud
    \bd{y} \ud \bd{p} \ud \bd{q} \\
    & + \frac{1}{(2\pi \veps)^{3d/2}} \int_{\RR^{3d}} \bigl(\partial_t
    a_- + \frac{\I a_-}{\veps} \partial_t \Phi_-\bigr)
    e^{\frac{\I}{\veps} \Phi_-(0, \bd{x}, \bd{y}, \bd{q} , \bd{p})}
    u_{-,0}(\bd{y}) \ud \bd{y} \ud \bd{p} \ud \bd{q}.
  \end{aligned}
\end{equation}

We take
\begin{align}
  & a_{\pm}(0, \bd{q}, \bd{p}) = 2^{d/2}, \label{eq:a0}\\
  & u_{+,0}(\bd{x}) = A_+(\bd{x}) e^{\frac{\I}{\veps} S_0(\bd{x})}, \\
  & u_{-,0}(\bd{x}) = A_-(\bd{x}) e^{\frac{\I}{\veps} S_0(\bd{x})},
\end{align}
with
\begin{equation}\label{eq:Apm}
  A_{\pm}(\bd{x}) = \frac{1}{2} \biggl( A_0(\bd{x}) \pm
  \frac{\I B_0(\bd{x})}{ c(\bd{x}) \abs{ \partial_{\bd{x}} S_0(\bd{x})}}
  \biggr).
\end{equation}
We next show that this will approximate the initial condition to the
leading order in $\veps$.

Substituting \eqref{eq:a0}-\eqref{eq:Apm} into \eqref{eq:initialu}
and using Lemma~\ref{lem:FBI}, we easily confirm that
\begin{equation*}
  u(0, \bd{x}) = u_{+,0}(\bd{x}) + u_{-,0}(\bd{x})
  = A_0(\bd{x}) e^{\frac{\I}{\veps} S_0(\bd{x})}.
\end{equation*}
For the initial velocity, we substitute \eqref{eq:a0}-\eqref{eq:Apm}
into \eqref{eq:initialv} and keep only the leading order terms in
$\veps$. According to Lemma~\ref{lem:veps1}, only the term $\mp
c\abs{\bd{p}}$ in $\partial_t \Phi_{\pm}$ will contribute to the
leading order, since the other terms that contain $(\bd{x} -
\bd{q})$ are $\Or(\veps)$. Hence,
\begin{equation*}
  \begin{aligned}
    \partial_t u(0, \bd{x}) = & - \frac{2^{d/2}}{(2\pi \veps)^{3d/2}}
    \int_{\RR^{3d}}  \frac{\I}{\veps} c(\bd{q})\abs{\bd{p}}
    e^{\frac{\I}{\veps} \Phi_+(0, \bd{x}, \bd{y}, \bd{q}, \bd{p})}
    u_{+,0}(\bd{y}) \ud
    \bd{y} \ud \bd{p} \ud \bd{q} \\
    & + \frac{2^{d/2}}{(2\pi \veps)^{3d/2}} \int_{\RR^{3d}} \frac{\I}{\veps}
    c(\bd{q}) \abs{\bd{p}} e^{\frac{\I}{\veps} \Phi_-(0, \bd{x},
      \bd{y}, \bd{q} , \bd{p})} u_{-,0}(\bd{y}) \ud \bd{y} \ud \bd{p}
    \ud \bd{q} + \Or(1).
  \end{aligned}
\end{equation*}
Consider the integral
\begin{equation*}
  \int_{\RR^d} c(\bd{q}) \abs{\bd{p}} e^{- \frac{\I}{\veps} \bd{p}
    \cdot (\bd{y} - \bd{q}) - \frac{1}{2\veps} \abs{\bd{y} -
      \bd{q}}^2} A_{\pm}(\bd{y})
  e^{\frac{\I}{\veps} S_0(\bd{y})} \ud \bd{y}
  = \int_{\RR^d} c(\bd{q}) \abs{\bd{p}} A_{\pm}(\bd{y})
  e^{\frac{\I}{\veps} \Theta(\bd{y}, \bd{q}, \bd{p})} \ud \bd{y}.
\end{equation*}
The phase function $\Theta$ is given by
\begin{equation*}
  \Theta(\bd{y}, \bd{q}, \bd{p}) =
  - \bd{p} \cdot (\bd{y} - \bd{q}) + \frac{\I}{2} \abs{\bd{y} - \bd{q}}^2
  + S_0(\bd{y}).
\end{equation*}
Clearly, $\Im \;\Theta \geq 0$ and $\Im \;\Theta = 0$ if and only if $
\bd{y} = \bd{q}$. The derivatives of $\Theta$ with respect to $\bd{y}$
are
\begin{align*}
  & \partial_{\bd{y}} \Theta = - \bd{p} + \partial_{\bd{y}} S_0(\bd{y})
  + \I (\bd{y} - \bd{q}), \\
  & \partial_{\bd{y}}^2 \Theta = \partial_{\bd{y}}^2 S_0(\bd{y}) + i I.
\end{align*}
Hence, the first derivative vanishes only when $\bd{y} = \bd{q}$ and
$\bd{p} = \partial_{\bd{y}} S_0(\bd{y})$, and $\det
\partial_{\bd{y}}^2 \Theta \not = 0$. Therefore, we can apply
stationary phase approximation with complex phase (see for example
\cite{Ho:83}) to conclude, for $(\bd{q}, \bd{p}) \in \RR^{2d}$,
\begin{multline*}
  \int_{\RR^d} c(\bd{q}) \abs{\bd{p}} e^{- \frac{\I}{\veps} \bd{p}
    \cdot (\bd{y} - \bd{q}) - \frac{1}{2\veps} \abs{\bd{y} -
      \bd{q}}^2} A_{\pm}(\bd{y})
  e^{\frac{\I}{\veps} S_0(\bd{y})} \ud \bd{y} \\
  = \int_{\RR^d} c(\bd{y}) \abs{\partial_{\bd{y}} S_0(\bd{y})} e^{-
    \frac{\I}{\veps} \bd{p} \cdot (\bd{y} - \bd{q}) - \frac{1}{2\veps}
    \abs{\bd{y} - \bd{q}}^2} A_{\pm}(\bd{y}) e^{\frac{\I}{\veps}
    S_0(\bd{y})} \ud \bd{y} + \Or(\veps).
\end{multline*}
Therefore,
\begin{multline}\label{eq:dtu0}
  \partial_t u(0, \bd{x}) = - \frac{2^{d/2}}{(2\pi \veps)^{3d/2}}
  \int_{\RR^{3d}} \frac{\I}{\veps} c(\bd{y}) \abs{\partial_{\bd{y}}
    S_0(\bd{y})} e^{\frac{\I}{\veps} \Phi_+(0, \bd{x}, \bd{y}, \bd{q},
    \bd{p})} u_{+,0}(\bd{y}) \ud
  \bd{y} \ud \bd{p} \ud \bd{q} \\
  + \frac{2^{d/2}}{(2\pi \veps)^{3d/2}} \int_{\RR^{3d}} \frac{\I}{\veps}
  c(\bd{y}) \abs{\partial_{\bd{y}} S_0(\bd{y})} e^{\frac{\I}{\veps}
    \Phi_-(0, \bd{x}, \bd{y}, \bd{q} , \bd{p})} u_{-,0}(\bd{y}) \ud
  \bd{y} \ud \bd{p} \ud \bd{q} + \Or(1).
\end{multline}
Substitute \eqref{eq:Apm} into \eqref{eq:dtu0} and use
Lemma~\ref{lem:FBI}, then
\begin{equation*}
  \partial_t u(0, \bd{x}) = \frac{1}{\veps} B_0(\bd{x})
  e^{\frac{\I}{\veps} S_0(\bd{x})},
\end{equation*}
which agrees with \eqref{eq:WKBini}.

\subsection{Derivation of the evolution equation of $a_{\pm}$}

In order to derive the evolution equation for the weight function
$a$, we carry out the asymptotic analysis of the wave equation
\eqref{eq:wave} using the ansatz \eqref{eq:ansatz1} in this section.
As the equation \eqref{eq:wave} is linear, we can deal with the two
branches separately. In the following, we only deal with the ``$+$''
branch that corresponds to $H_+$, and the other is completely
analogous. For simplicity, we drop the subscript ``$+$'' in the
notations.

Substituting \eqref{eq:ansatz1} into the equation \eqref{eq:wave}
(keeping only the ``$+$'' branch) gives
\begin{equation*}
  \partial_t^2 u = \frac{1}{(2\pi \veps)^{3d/2}} \int_{\RR^{3d}}
  \Bigl( \partial_t^2 a + 2 \frac{\I}{\veps} \partial_t a \partial_t \Phi
  + \frac{\I}{\veps} a \partial_t^2 \Phi - \frac{1}{\veps^2} a(\partial_t\Phi)^2
  \Bigr) e^{\I\Phi/\veps} u_0 \ud \bd{y} \ud \bd{p} \ud \bd{q},
\end{equation*}
and
\begin{equation*}
  \Delta u = \frac{1}{(2\pi \veps)^{3d/2}} \int_{\RR^{3d}}
  \Bigl( \frac{\I}{\veps}  \Delta \Phi - \frac{1}{\veps^2}
  (\partial_{\bd{x}}\Phi \cdot \partial_{\bd{x}} \Phi )
  \Bigr) a e^{\I\Phi/\veps} u_0 \ud \bd{y} \ud \bd{p} \ud \bd{q}.
\end{equation*}

Squaring both sides of \eqref{eq:partialtPhi} yields
\begin{multline}
  (\partial_t \Phi)^2 = c^2 \abs{\bd{P}}^2 + \Bigl( (\bd{x} - \bd{Q})
  \cdot \Bigl( \abs{\bd{P}} \partial_{\bd{Q}} c + \I c
  \frac{\bd{P}}{\abs{\bd{P}}} \Bigr)\Bigr)^2 \\
  + 2 c \abs{\bd{P}} (\bd{x} - \bd{Q}) \cdot \Bigl(
  \abs{\bd{P}} \partial_{\bd{Q}} c + \I c \frac{\bd{P}}{\abs{\bd{P}}}
  \Bigr).
\end{multline}
Differentiating \eqref{eq:partialtPhi} with respect to $t$, one has
\begin{equation}
  \begin{aligned}
    \partial_t^2 \Phi & = - \partial_t(\abs{\bd{P}} c) + \partial_t
    \bd{Q} \cdot \Bigl( \abs{\bd{P}}\partial_{\bd{Q}} c
    + \I \frac{\bd{P}}{\abs{\bd{P}}} c \Bigr) \\
    & - ( \bd{x} - \bd{Q} ) \cdot \Bigl( \partial_{\bd{Q}} c
    \frac{\bd{P}\cdot \partial_t \bd{P}}{\abs{\bd{P}}}
    + \partial_{\bd{Q}}^2 c \cdot \partial_t \bd{Q} \abs{\bd{P}} \\
    & \hspace{6em} + \I c \frac{\partial_t \bd{P}}{\abs{\bd{P}}}
    - \I c \bd{P} \frac{\bd{P} \cdot \partial_t \bd{P}}{\abs{\bd{P}}^3}
    + \I \frac{\bd{P}}{\abs{\bd{P}}} \partial_{\bd{Q}} c \cdot \partial_t \bd{Q}
    \Bigr).
  \end{aligned}
\end{equation}
We simplify the last equation using \eqref{eq:characline},
\begin{equation}
  \begin{aligned}
    \partial_t^2 \Phi & = c \bd{P} \cdot \partial_{\bd{Q}} c + \I c^2 \\
    & - (\bd{x} - \bd{Q}) \cdot \Bigl(- \partial_{\bd{Q}} c \bd{P}
    \cdot \partial_{\bd{Q}} c + c \partial_{\bd{Q}}^2 c \cdot \bd{P} \\
    & \hspace{6em} - \I c \partial_{\bd{Q}} c + 2\I c \bd{P}
    \frac{\bd{P} \cdot \partial_{\bd{Q}} c} {\abs{\bd{P}}^2}\Bigr).
  \end{aligned}
\end{equation}

Taking derivatives with respect to $\bd{x}$ produces
\begin{equation}
  \partial_{\bd{x}} \Phi = \bd{P} + \I (\bd{x} - \bd{Q}),
\end{equation}
\begin{equation}
  \partial_{\bd{x}} \Phi \cdot \partial_{\bd{x}} \Phi
  = \abs{\bd{P}}^2 + 2 \I \bd{P} \cdot (\bd{x} - \bd{Q})
  - \abs{ \bd{x} - \bd{Q} }^2,
\end{equation}
and
\begin{equation}
  \Delta \Phi = d \I.
\end{equation}

We next expand $c(\bd{x})$ around the point $\bd{Q}$,
\begin{equation}
  c(\bd{x}) = c + \partial_{\bd{Q}} c \cdot ( \bd{x} - \bd{Q} )
  + \frac{1}{2} (\bd{x} - \bd{Q}) \cdot \partial_{\bd{Q}}^2 c
  (\bd{x} - \bd{Q}) + \Or(\abs{\bd{x} - \bd{Q}})^3,
\end{equation}
and
\begin{multline}
  c^2(\bd{x}) = c^2 + 2 c \partial_{\bd{Q}} c \cdot ( \bd{x} - \bd{Q}
  ) + (\partial_{\bd{Q}} c \cdot (\bd{x} - \bd{Q}))^2 \\
  + c (\bd{x} - \bd{Q}) \cdot \partial_{\bd{Q}}^2 c  (\bd{x} -
  \bd{Q}) + \Or(\abs{\bd{x} - \bd{Q}})^3.
\end{multline}
The terms $c$, $\partial_{\bd{Q}} c$ and $\partial_{\bd{Q}}^2 c$ on
the right hand sides are all evaluated at $\bd{Q}$.

Substituting all the above into the wave equation \eqref{eq:wave}
and keeping only the leading order terms give
\begin{equation*}
  \begin{aligned}
    & 2 \frac{\I}{\veps} \partial_t a (- c \abs{\bd{P}} ) u +
    \frac{\I}{\veps} a ( c \bd{P}\cdot \partial_{\bd{Q}} c + \I c^2 ) u \\
    & - \frac{1}{\veps^2} a \Bigl( 2 c (\bd{x} - \bd{Q}) \cdot
    (\abs{\bd{P}}^2 \partial_{\bd{Q}} c + \I c \bd{P}) + \bigl(
    (\bd{x} - \bd{Q}) \cdot (\abs{\bd{P}} \partial_{\bd{Q}}c
    + \I c \bd{P} / \abs{\bd{P}}) \bigr)^2 \Bigr) u \\
    & - c^2 \Bigl( - \frac{1}{\veps} ad - \frac{2\I}{\veps^2} a
    \bd{P}\cdot (\bd{x} - \bd{Q}) + \frac{1}{\veps^2} a \abs{\bd{x} -
      \bd{Q}}^2\Bigr) u \\
    & + \frac{2}{\veps^2} a c \partial_{\bd{Q}} c \cdot (\bd{x} -
    \bd{Q})
    (\abs{\bd{P}}^2 + 2\I \bd{P} \cdot ( \bd{x} - \bd{Q} )) u  \\
    & + \frac{1}{\veps^2} a \abs{\bd{P}}^2 \bigl( (\partial_{\bd{Q}}c
    \cdot (\bd{x} - \bd{Q}))^2 u + c (\bd{x} - \bd{Q}) \cdot
    \partial_{\bd{Q}}^2 c (\bd{x} - \bd{Q}) \bigr) u \sim \Or(1).
  \end{aligned}
\end{equation*}
After reorganizing the terms, we get
\begin{equation}
  2 \frac{\I}{\veps} c \abs{\bd{P}} \partial_t a u
  \sim \frac{\I}{\veps} a ( c\bd{P}\cdot \partial_{\bd{Q}} c - (d - 1)c^2 \I) u
  - \frac{1}{\veps^2} a (\bd{x} - \bd{Q}) \cdot M (\bd{x} - \bd{Q}) u,
\end{equation}
where
\begin{multline}
  M = (\abs{\bd{P}} \partial_{\bd{Q}}c - \I c \bd{P}/ \abs{\bd{P}})
  \otimes (\abs{\bd{P}} \partial_{\bd{Q}}c - \I c \bd{P}/
  \abs{\bd{P}}) + c^2 I \\
  - \abs{\bd{P}}^2 \partial_{\bd{Q}} c \otimes
  \partial_{\bd{Q}} c - \abs{\bd{P}}^2 c \partial_{\bd{Q}}^2 c.
\end{multline}
Lemma \ref{lem:veps1} shows that
\begin{equation}\label{eq:quadform}
  a (\bd{x} - \bd{Q}) \cdot M (\bd{x} - \bd{Q}) u \sim
  \veps a \tr(Z^{-1} \partial_{\bd{z}} \bd{Q} M) u + \Or(\veps^2).
\end{equation}
Therefore, to the leading order, we obtain the evolution equation of
$a$,
\begin{multline}\label{eq:aeqn}
  \partial_t a = \frac{a}{2} \Bigl( \frac{\bd{P}}{\abs{\bd{P}}}\cdot
  \partial_{\bd{Q}} c - \frac{(d-1)\I}{\abs{\bd{P}}} c \Bigr) \\
  + \frac{a}{2} \tr\biggl( Z^{-1} \partial_{\bd{z}} \bd{Q} \Bigl( 2
  \frac{\bd{P}}{\abs{\bd{P}}} \otimes \partial_{\bd{Q}} c - \frac{\I
    c}{\abs{\bd{P}}} \Bigl( \frac{\bd{P}\otimes
    \bd{P}}{\abs{\bd{P}}^2} - I \Bigr) - \I
  \abs{\bd{P}}\partial_{\bd{Q}}^2 c \Bigr) \biggr).
\end{multline}
Notice that
\begin{align*}
&\frac{\ud Z}{\ud t}=\partial_{\bd{z}}\left( \frac{\ud \bd{Q}}{\ud
t}+\I\frac{\ud \bd{P}}{\ud t}
\right)=\partial_{\bd{z}}\left(c\frac{\bd{P}}{\abs{\bd{P}}}-\I
\partial_{\bd{Q}}c\abs{\bd{P}}\right) \\ &\qquad =\partial_{\bd{z}}\bd{Q}\frac{
\partial_{\bd{Q}}c\otimes
\bd{P}}{\abs{\bd{P}}}+c\partial_{\bd{z}}\bd{P}
\Bigl( \frac{I}{\abs{\bd{P}}}-\frac{\bd{P}\otimes
    \bd{P}}{\abs{\bd{P}}^3}
    \Bigr)-\I\partial_{\bd{z}}\bd{Q}\partial^2_{\bd{Q}}c\abs{\bd{P}}-
    \I\partial_{\bd{z}}\bd{P}\frac{\bd{P}\otimes
    \partial_{\bd{Q}}c}{\abs{\bd{P}}},\end{align*}and
\begin{equation*}-\frac{(d-1)\I}{\abs{\bd{P}}}c=\tr\bigg(Z^{-1}(\partial_{\bd{z}}\bd{Q}
+\I\partial_{\bd{z}}\bd{P})\frac{\I
c}{\abs{\bd{P}}}\Big(\frac{\bd{P}\otimes\bd{P}}{\abs{\bd{P}}^2}-I\Big)
\bigg).
\end{equation*}

By using the fact that \eqref{eq:quadform} has a quadratic form, one
has
\[\tr\bigg(Z^{-1}\partial_{\bd{z}}\bd{Q}\frac{\bd{P}}{\abs{\bd{P}}}
\otimes\partial_{\bd{Q}}c\bigg)=\tr\bigg(Z^{-1}\partial_{\bd{z}}\bd{Q}
\frac{\partial_{\bd{Q}}c}{\abs{\bd{P}}} \otimes\bd{P}\bigg).\]

Hence \eqref{eq:aeqn} can be reformulated as
\begin{equation*} \frac{\ud a}{\ud
t}=a\frac{\bd{P}}{\abs{\bd{P}}}\cdot
  \partial_{\bd{Q}} c+\frac{a}{2}\tr\left(Z^{-1}\frac{\ud Z}{\ud t} \right).
\end{equation*}

This completes the asymptotic derivation. We remark that in the case
of time dependent Schr\"odinger equation, the asymptotics have been
made rigorous in \cites{SwRo:09, Ro:09}.

\section{Numerical examples}\label{sec:numer}
In this section, we give both one and two dimensional numerical
examples to justify the accuracy of the frozen Gaussian
approximation (FGA). Without loss of generality, we only consider
the wave propagation determined by the ``$+$'' branch of
\eqref{eq:ansatz} which implies that
$B_0(\bd{x})=-\I c(\bd{x})\abs{\nabla_{\bd{x}}S_0(\bd{x})}A_0(\bd{x})$ in
\eqref{eq:WKBini}.

\subsection{One dimension}
Using one-dimensional examples in this section, we compare FGA with
the Gaussian beam method (GBM) in both the accuracy and the
performance when beams spread in GBM. We denote the solution of GBM as
$u^{\GBM}$, and summarize its discrete numerical formulation (only the
``$+$'' branch) as follows for readers' convenience
(\cites{Ra:82,Ta:08,LiRa:09}),
\begin{multline*}
  u^{\GBM}(t,x)=\sum_{j=1}^{N_{y_0}}\left(\frac{1}{2\pi\epsilon}\right)^\frac{1}{2}
  r_{\theta}(\abs{x-y_j})A(t,y_j)\\
  \times \exp\left(\frac{\I}{\veps}
    (S(t,y_j)+\xi(t,y_j)(x-y_j)+M(t,y_j)(x-y_j)^2/2) \right)\delta
  y_0,
\end{multline*} and $y_j,\;\xi,\;S,\;M,\;A$ satisfy
\begin{align*}
  & \frac{\ud y_j}{\ud t}=c(y_j)\frac{\xi}{\abs{\xi}},\qquad
  y_j(0)=y_0^j, \\
  & \frac{\ud \xi}{\ud t}=-\partial_{y_j} c(y_j)\abs{\xi},\qquad
  \xi(0)=\partial_{y_j}S_0(y_j),\\
  & \frac{\ud S}{\ud t}=0,\qquad
  S(0)=S_0(y_j), \\
  & \frac{\ud M}{\ud
    t}=-2\partial_{y_j}c(y_j)\frac{\xi}{\abs{\xi}}M-\partial^2_{y_j}c(y_j)
  \abs{\xi},\qquad M(0)=\partial_{y_j}^2S_0(y_j)+\I, \\
  & \frac{\ud A}{\ud
    t}=\frac{1}{2}\partial_{y_j}c(y_j)\frac{\xi}{\abs{\xi}}A,\qquad
  A(0)=A_0(y_j),
\end{align*} where $r_{\theta}$ is the cutoff function,
$y_0^j$'s are the equidistant mesh points, $\delta y_0$ is the mesh
size and $N_{y_0}$ is the total number of the beams initially
centered at $y_0^j$.

\begin{example}\label{exa:1}

The wave speed is $c(x)=x^2$. The initial conditions are
\begin{align*}
&u_0=\exp\bigl(-100(x-0.5)^2\bigr)\exp\left(\frac{\I x}{\veps}\right),\\
&\partial_t
u_0=-\frac{\I x^2}{\veps}\exp\bigl(-100(x-0.5)^2\bigr)\exp\left(\frac{\I
x}{\veps}\right).
\end{align*}
\end{example}

The final time is $T=0.5$. We plot the real part of the wave field
obtained by FGA compared with the true solution in Figure
\ref{fig:ex1_real} for $\veps=1/64,\;1/128,\;1/256$. The true solution
is computed by the finite difference method using the mesh size of
$\delta x=1/2^{12}$ and the time step of $\delta t=1/2^{18}$ for
domain $[0,2]$. Table \ref{tab:ex1_err} shows the $\ell^\infty$ and
$\ell^2$ errors of both the FGA solution $u^{\FGA}$ and the GBM
solution $u^{\GBM}$. The convergence orders in $\veps$ of
$\ell^\infty$ and $\ell^2$ norms are $1.08$ and $1.17$ separately for
FGA, and $0.54$ and $0.57$ for GBM. We observe a better accuracy order
of FGA than GBM.

We choose $\delta t=1/2^{11}$ for solving the ODEs and $\delta
x=1/2^{12}$ to construct the final solution in both FGA and GBM. In
FGA, we take $\delta q=\delta p=\delta y=1/2^7,\;N_q=128,\;N_p=45$ for
$\veps=1/128,\;1/256$ and $\delta q=\delta p=\delta
y=1/2^5,\;N_q=32,\;N_p=33$ for $\veps=1/64$. In GBM, we take $\delta
y_0=1/2^7,\;N_{y_0}=128$ for $\veps=1/128,\;1/256$ and $\delta
y_0=1/2^5,\;N_{y_0}=32$ for $\veps=1/64$.

We remark that in this example the mesh sizes of $p$ and $q$
have been taken very small and $N_p$ large enough to make sure that
the error of FGA mostly comes from asymptotic expansion, but
not from initial value decomposition, numerical integration of ODEs
and so on.  Such a choice of fine mesh is not necessary for the
accuracy of FGA, as one can see in Example \ref{exa:3}.

\begin{table}
\caption{Example \ref{exa:1}, the $\ell^\infty$ and
  $\ell^2$ errors for FGA and GBM.}\label{tab:ex1_err}
\begin{tabular}{cccc}
\hline \\[-1em] $\veps$ &
${1}/{2^6}$ & ${1}/{2^7}$ &
${1}/{2^8}$ \\ \hline \\[-1em]
$\norm{u-u^{\FGA}}_{\ell^\infty}$ & $1.12\times 10^{-1}$  &
$6.18\times10^{-2}$ &
$2.51\times10^{-2}$\\
\hline
\\[-1em] $\norm{u-u^{\FGA}}_{\ell^2}$& $6.05\times 10^{-2}$ & $2.96\times10^{-2}$ & $1.19\times10^{-2}$ \\
\hline \\[-1em]
$\norm{u-u^{\GBM}}_{\ell^\infty}$ & $7.15\times 10^{-1}$  &
$5.08\times10^{-1}$ &
$3.36\times10^{-1}$\\
\hline
\\[-1em] $\norm{u-u^{\GBM}}_{\ell^2}$& $3.26\times 10^{-1}$ & $2.28\times10^{-1}$ & $1.47\times10^{-1}$ \\ \hline
\end{tabular}
\end{table}

\begin{figure}[h t p]
\begin{tabular}{cc}
\resizebox{2.3in}{!}{\includegraphics{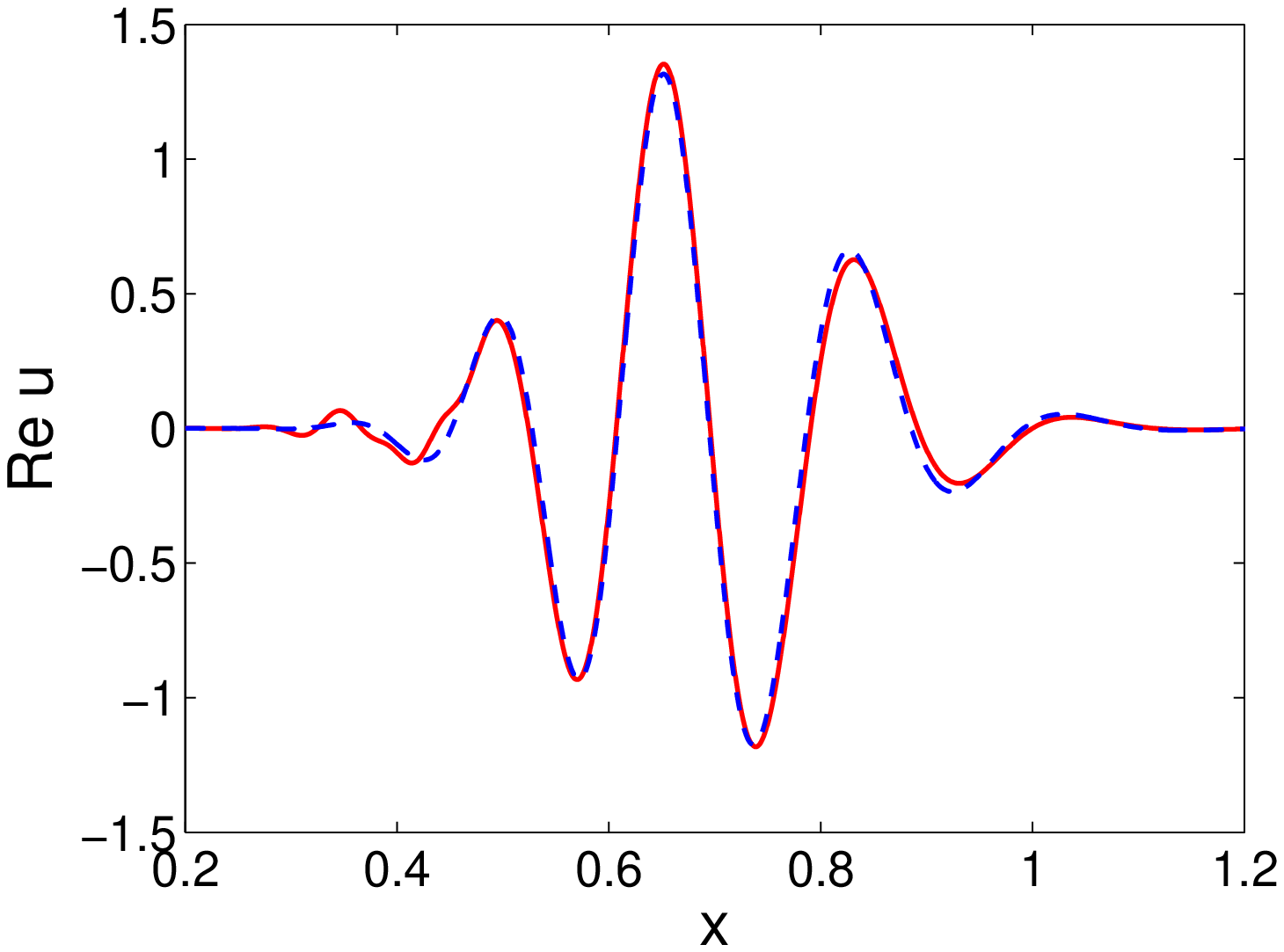}} &
\resizebox{2.3in}{!}{\includegraphics{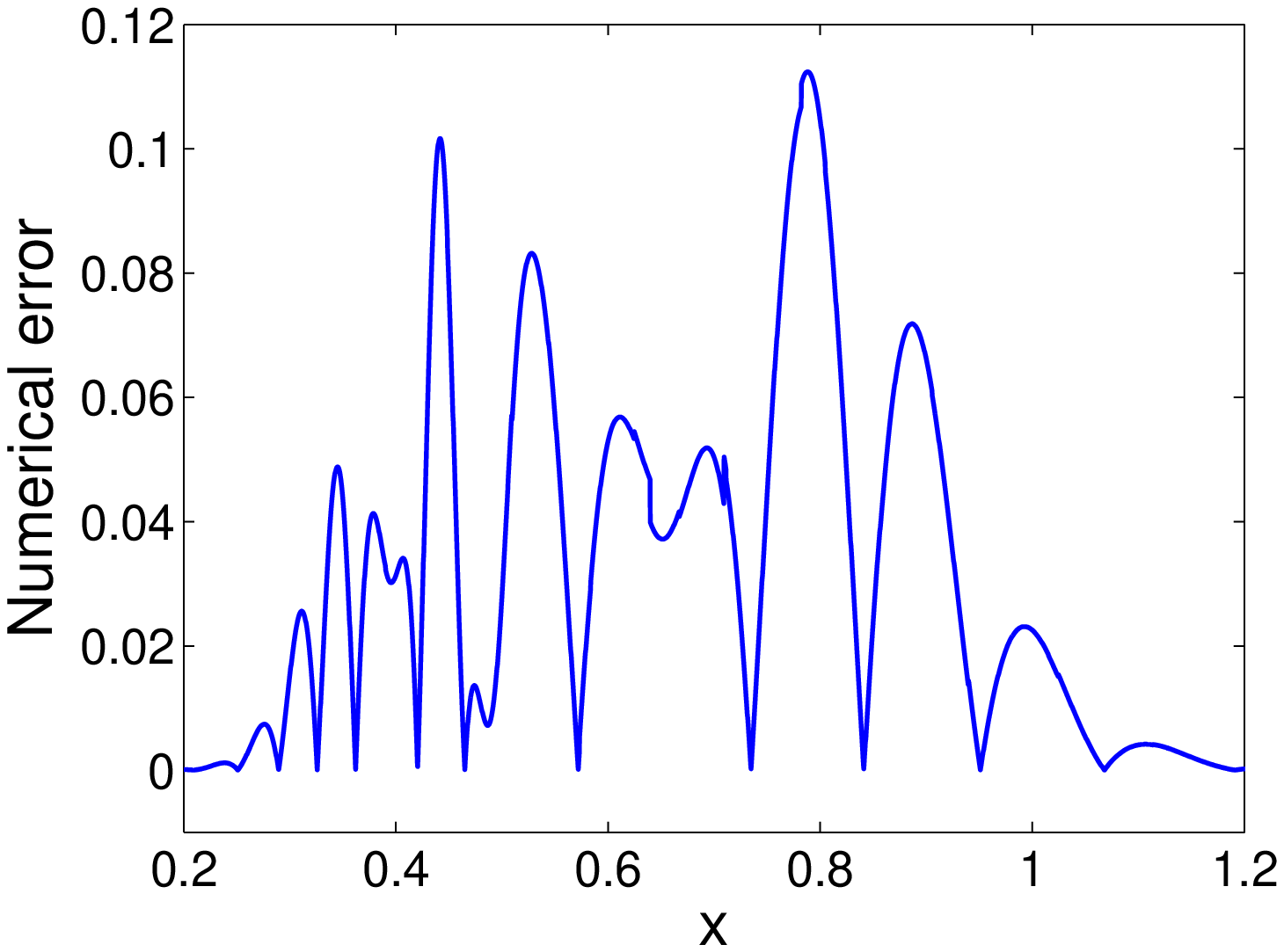}} \\
  \multicolumn{2}{c}{(a) $\veps=\frac{1}{64}$} \\[3mm]
\resizebox{2.3in}{!}{\includegraphics{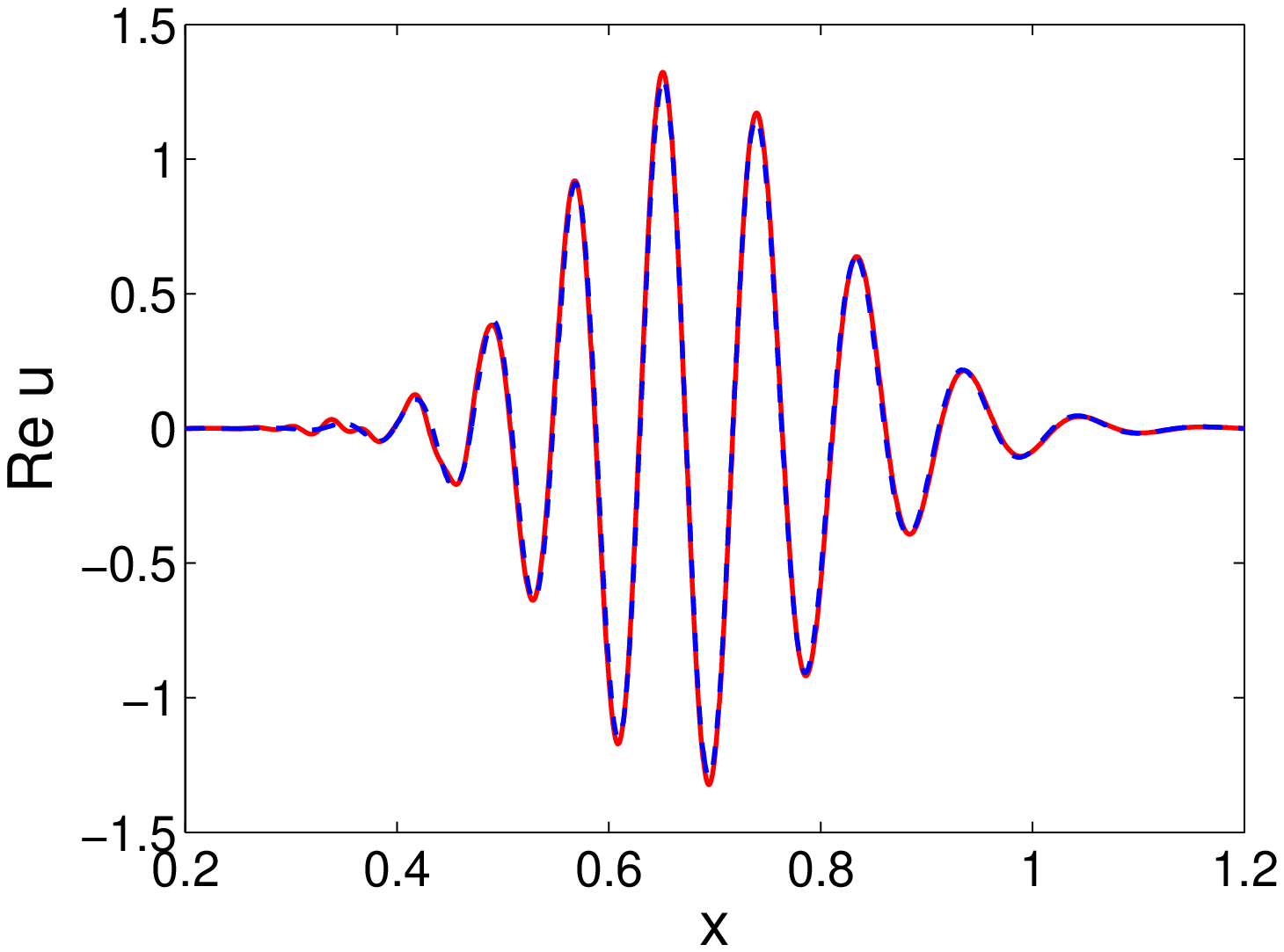}} &
\resizebox{2.3in}{!}{\includegraphics{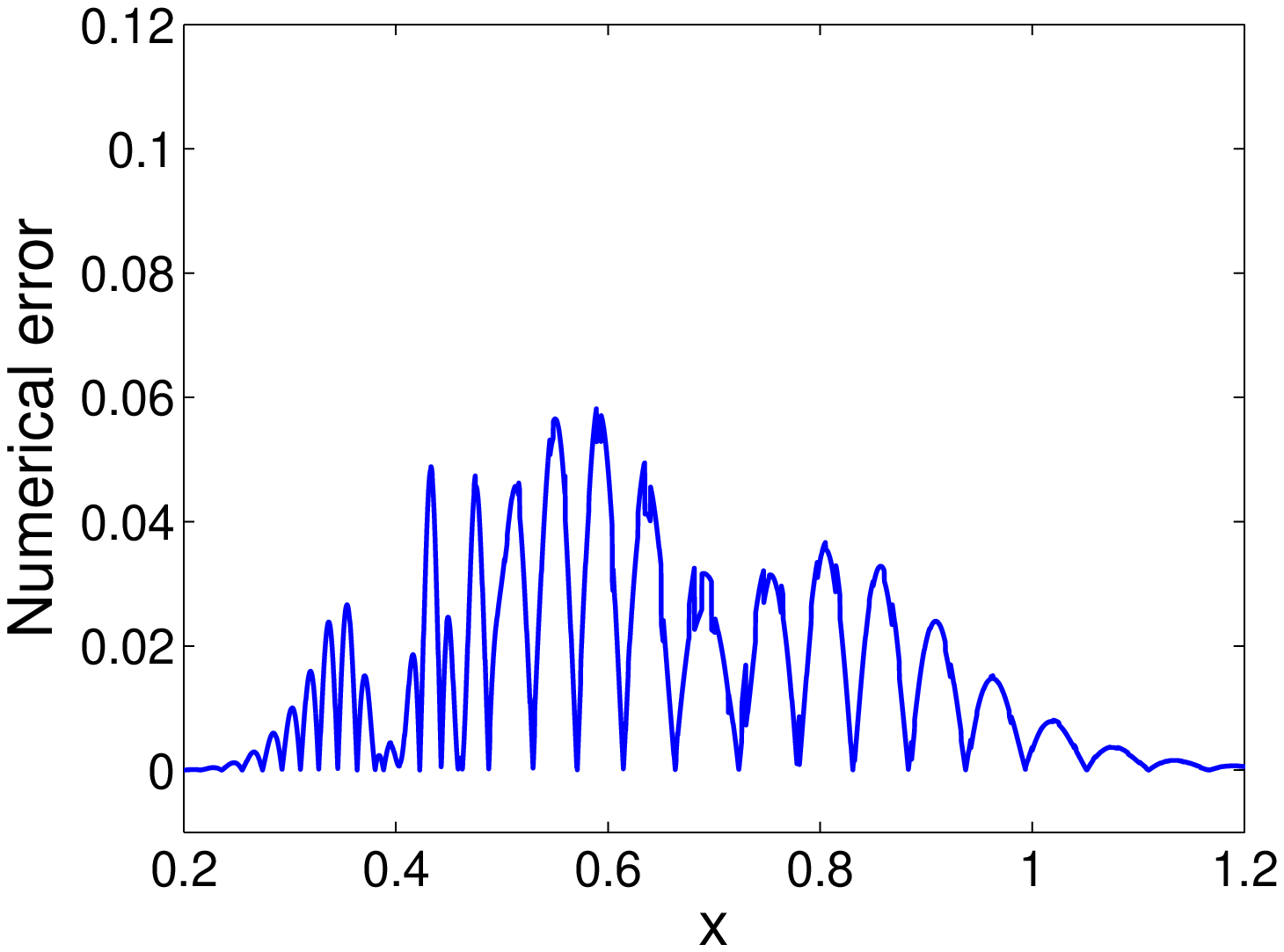}} \\
  \multicolumn{2}{c}{(b) $\veps=\frac{1}{128}$} \\[3mm]
\resizebox{2.3in}{!}{\includegraphics{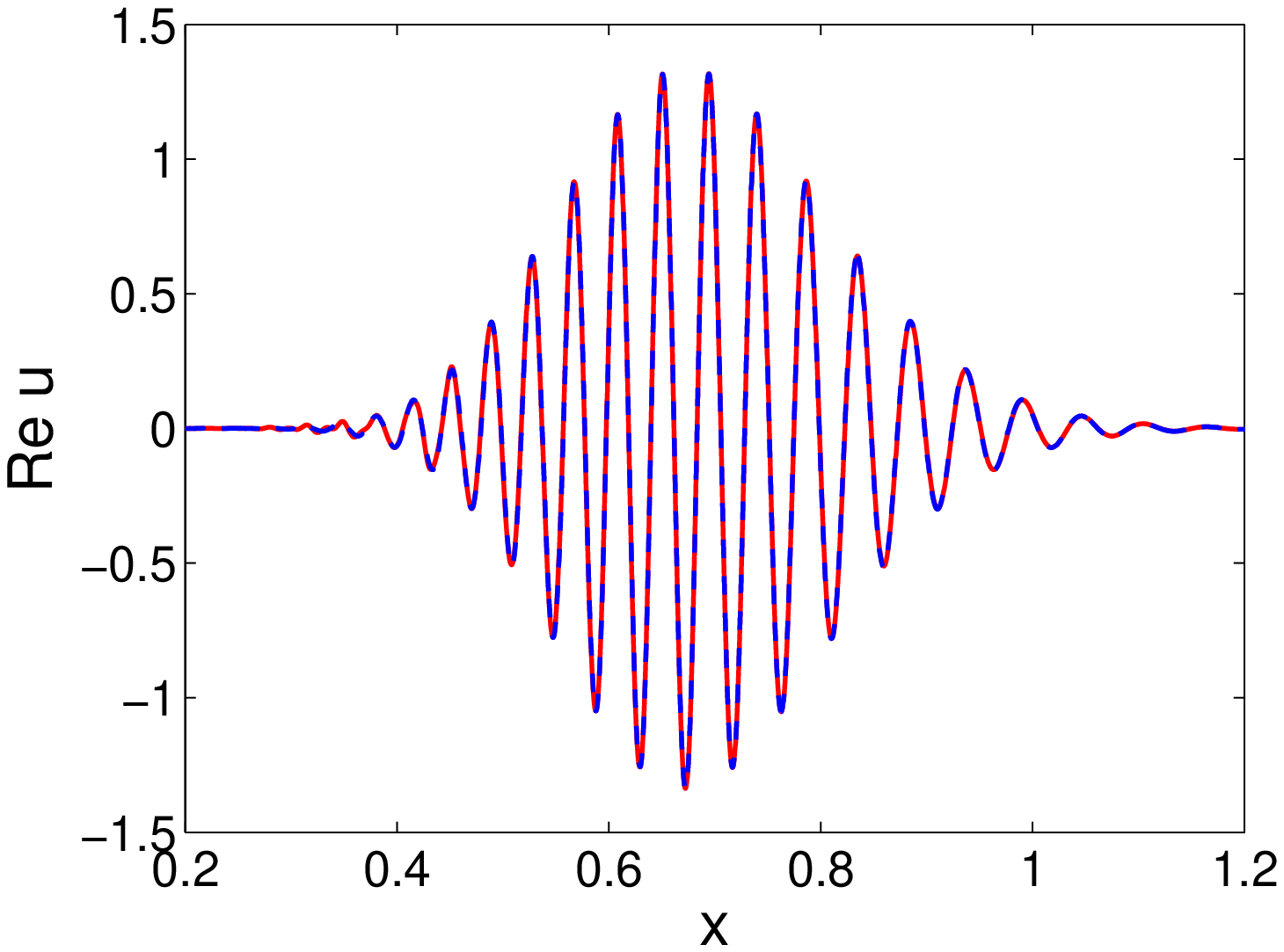}} &
\resizebox{2.3in}{!}{\includegraphics{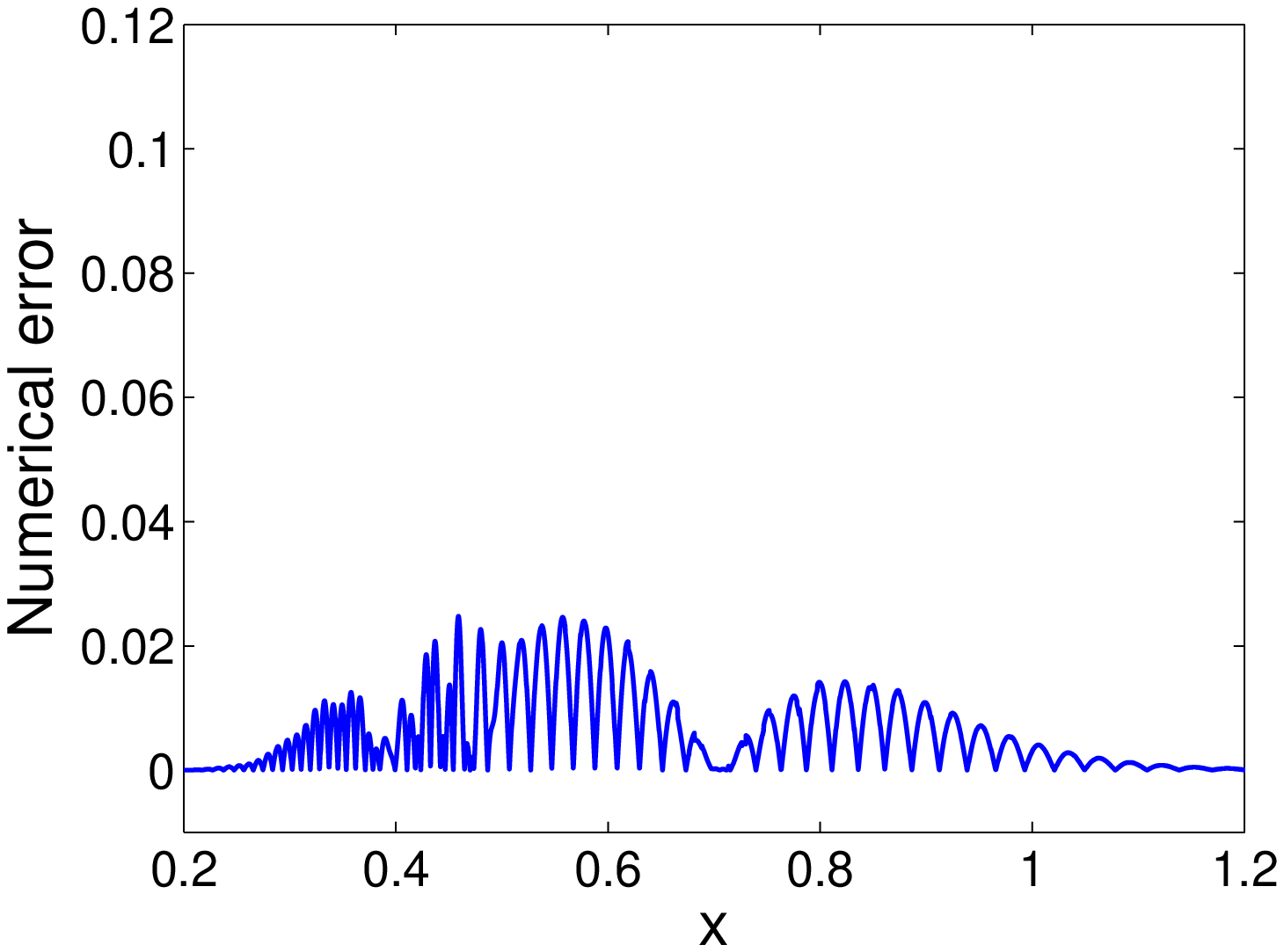}} \\
  \multicolumn{2}{c}{(c) $\veps=\frac{1}{256}$}
\end{tabular}
\caption{Example \ref{exa:1}, the comparison of the true solution
(solid line) and the solution by FGA (dashed line). Left: the real
part of wave field; right: the errors between
them.}\label{fig:ex1_real}
\end{figure}


\begin{example}\label{exa:2}

  The wave speed is $c(x)=x^2$. The initial conditions are
  \begin{align*}
    & u_0=\exp\left(-\frac{(x-0.55)^2}{2\veps}\right)
    \exp\left(\frac{\I x}{\veps} \right),\\
    & \partial_t
    u_0=-\frac{\I x^2}{\veps}\exp\left(-\frac{(x-0.55)^2}{2\veps}\right)
    \exp\left(\frac{\I x}{\veps}\right).
  \end{align*}
\end{example}

We use this example to illustrate the performances of FGA and GBM
when the beams spread in GBM. The final time is $T=1.0$ and
$\veps=1/256$. Remark that the initial condition is chosen as a
single beam on purpose so that one can apply GBM without introducing
any initial errors. The true solution is provided by the
finite difference method using $\delta x=1/2^{11}$ and $\delta
t=1/2^{17}$ for domain $[0,4]$. We take $\delta q=\delta p=\delta
y=1/2^7,\;N_q=128,\;N_p=45$ in FGA to make sure that the error in
the initial value decomposition of FGA is very small. The time step
is $\delta t=1/2^{10}$ for solving the ODEs and the mesh size is
chosen as $\delta x=1/2^{11}$ to construct the final solution in
both FGA and GBM.

Figure \ref{fig:ex2_compare} compares the amplitudes of the wave
field computed by FGA and GBM, and the true solution. One can see
that the beam has spread severely in GBM. The results confirm that
FGA has a good performance even when the beam spreads, while GBM
does not. Moreover, it does not help improving the accuracy if one
uses more Gaussian beams to approximate the initial condition in GBM
as shown in Figure \ref{fig:ex2_multi}, where $N_{y_0}=128$ beams
are used initially and $\delta y_0=1/2^7$. Remark that GBM can still
give good approximation around beam center where Taylor expansion
does not introduce large errors. This can be seen around $x=1.2$ in
Figure \ref{fig:ex2_compare}.

\begin{figure}[h t p]
\begin{tabular}{cc}
  \resizebox{2.3in}{!}{\includegraphics{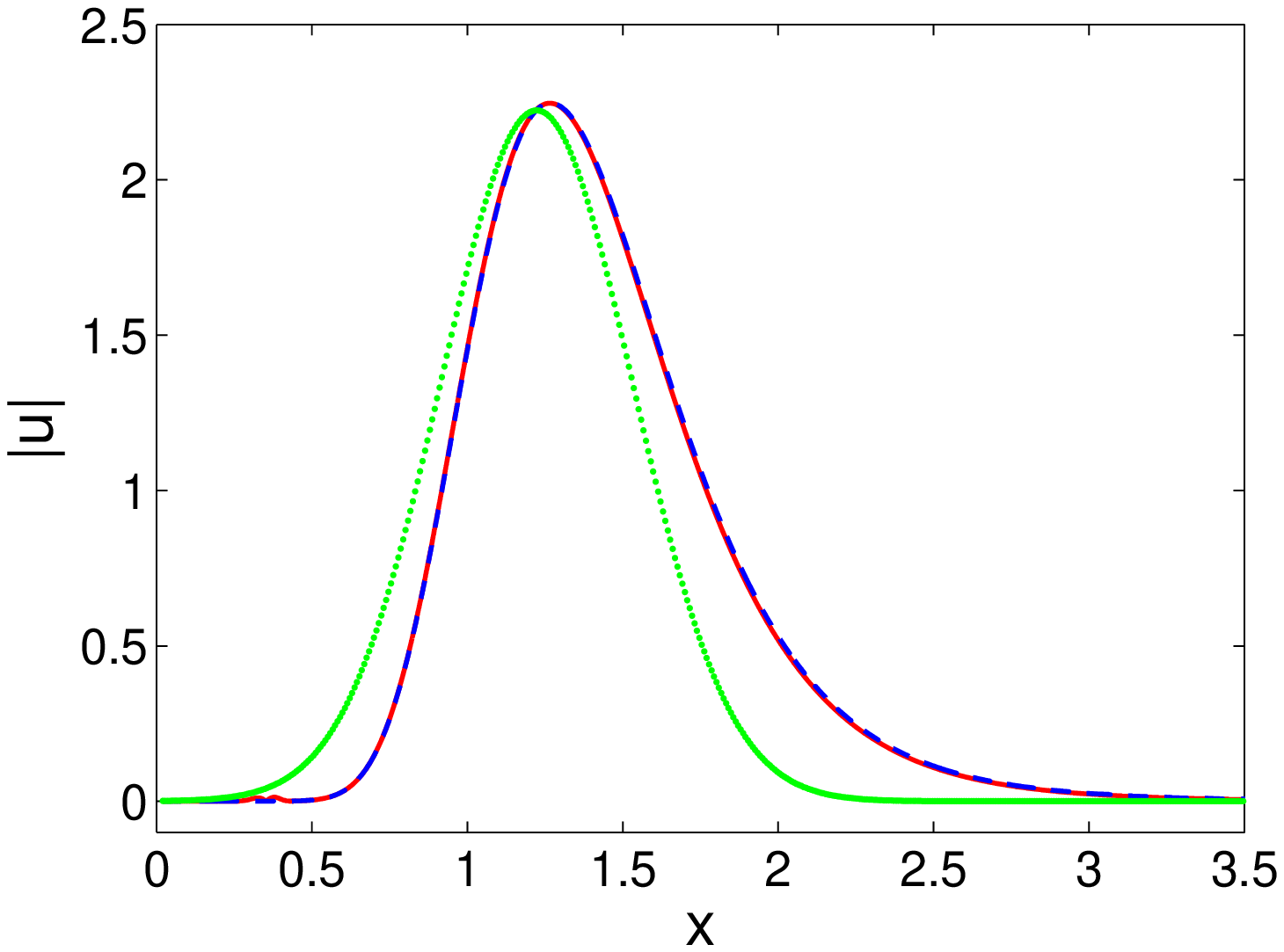}} &
  \resizebox{2.3in}{!}{\includegraphics{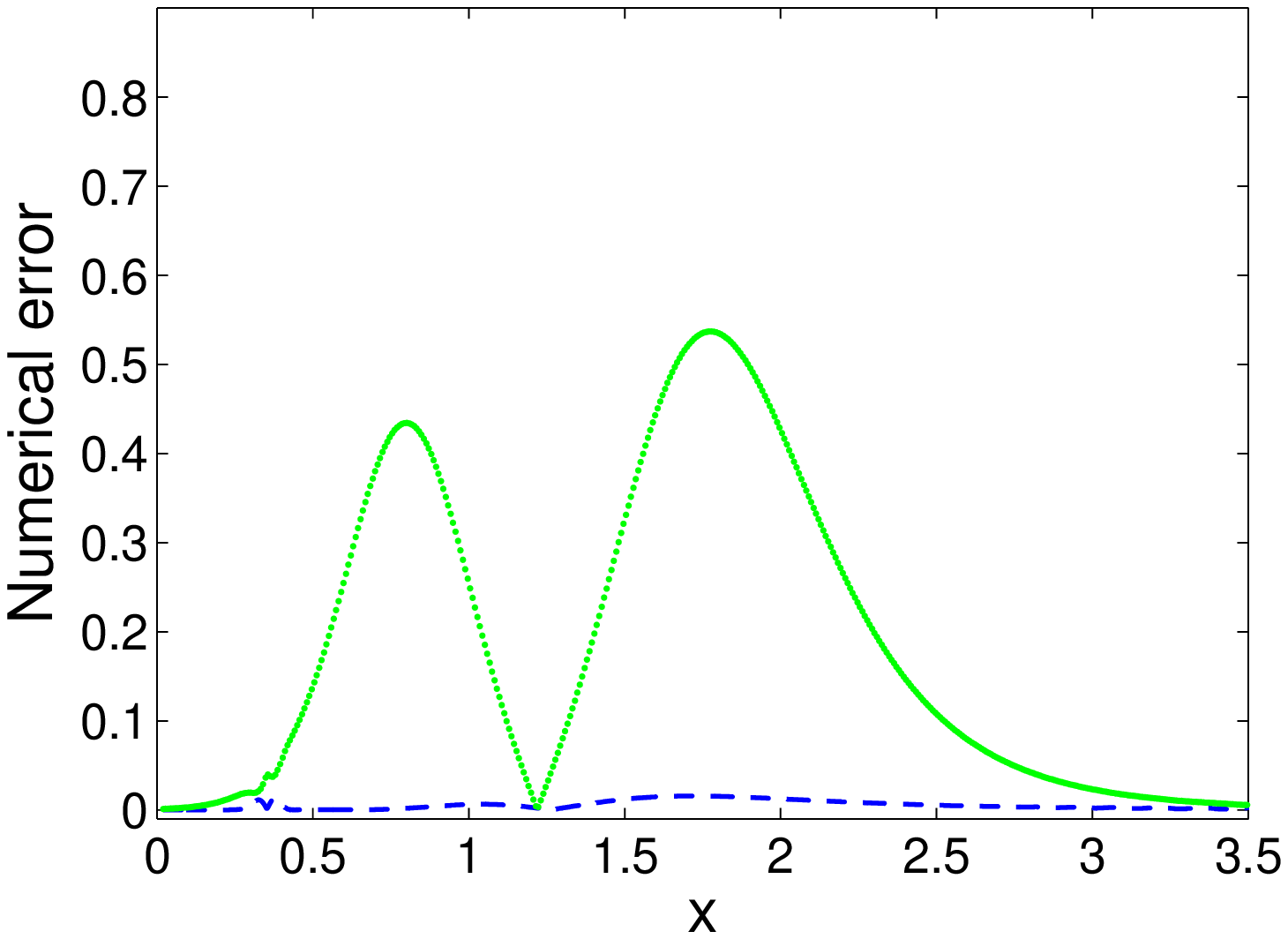}}
\end{tabular}
\caption{Example \ref{exa:2}, the comparison of the true solution
  (solid line), the solution by FGA (dashed line) and the
  solution by GBM (dots) for $\veps=\frac{1}{256}$.  Left:
  the amplitude of wave field; right: the error between them (dashed
  line for FGA, dots for GBM).}\label{fig:ex2_compare}
\end{figure}

\begin{figure}[h t p]
\begin{tabular}{cc}
\resizebox{2.3in}{!}{\includegraphics{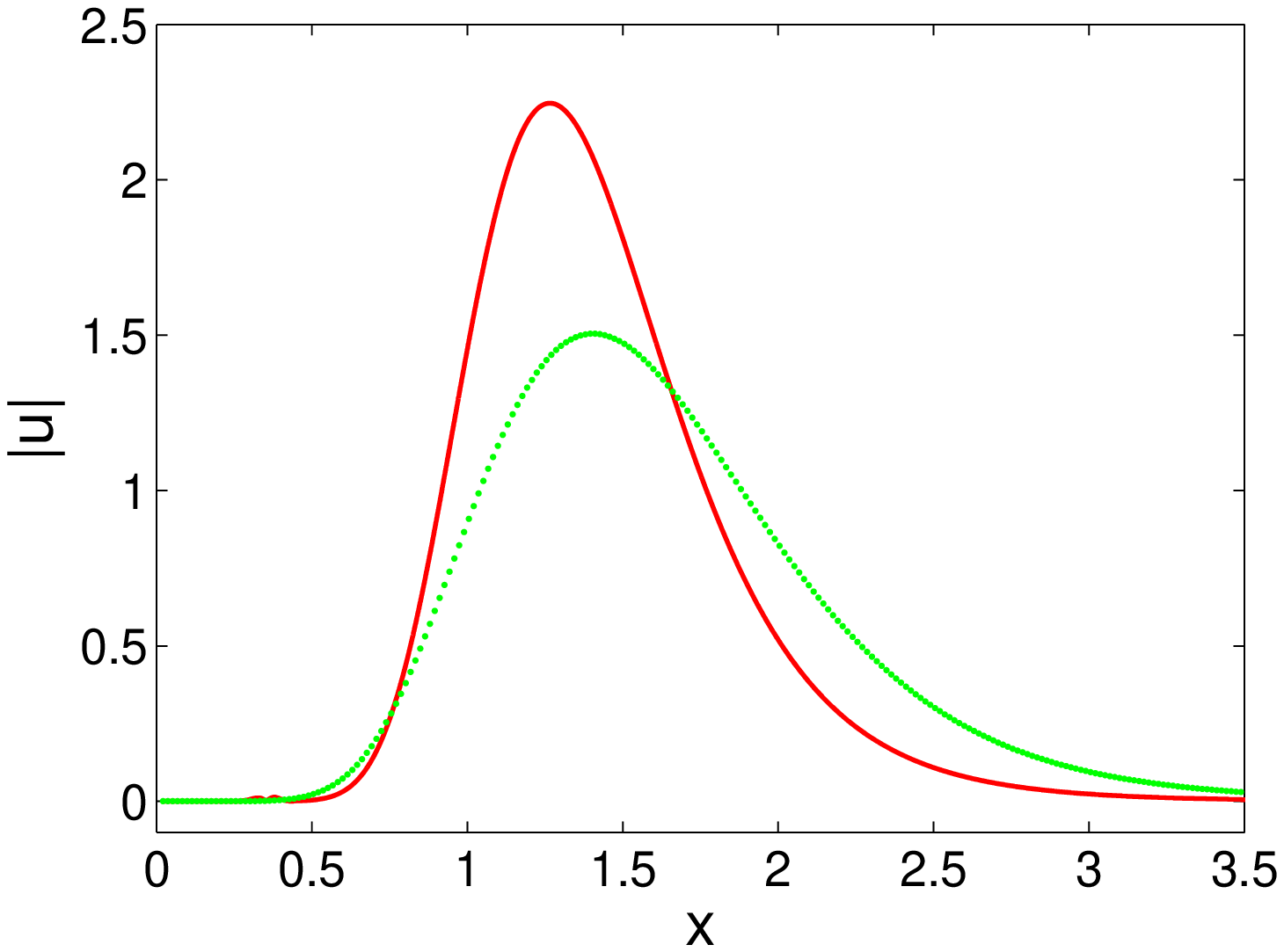}} &
\resizebox{2.3in}{!}{\includegraphics{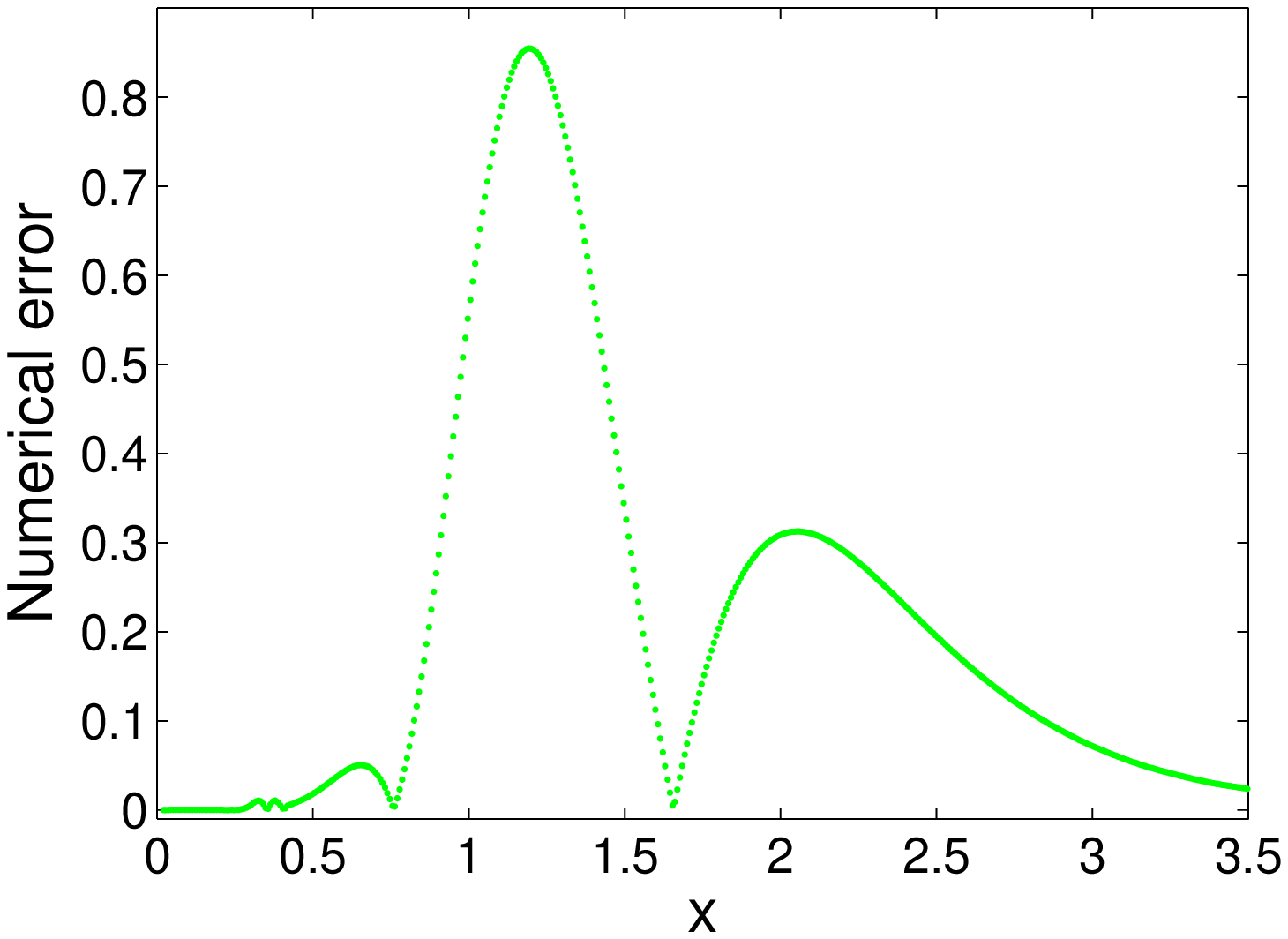}}
\end{tabular}
\caption{Example \ref{exa:2}, the comparison of the true solution
  (solid line) and the solution by GBM using multiple
  Gaussian initial representation (dots) for
  $\veps=\frac{1}{256}$. Left: the amplitude of wave field; right:
  the error between them.}\label{fig:ex2_multi}
\end{figure}

\subsection{Two dimension}

\begin{example}\label{exa:3}

The wave speed is $c(x_1,x_2)=1$. The initial conditions are
\begin{align*}
  & u_0=\exp\bigl(-100(x_1^2+x_2^2)\bigr)
  \exp\left(\frac{\I}{\veps}(-x_1+\cos(2x_2))\right),\\
  & \partial_t u_0=-\frac{\I}{\veps}\sqrt{1+4\sin^2(2x_2)}
  \exp\bigl(-100(x_1^2+x_2^2)\bigr)
  \exp\left(\frac{\I}{\veps}(-x_1+\cos(2x_2))\right).
\end{align*}
\end{example}

This example presents the cusp caustics shown in Figure
\ref{fig:ex3_caustic}. The final time is $T=1.0$. The true solution
is given by the spectral method using the mesh $\delta x_1=\delta
x_2=1/512$ for domain $[-1.5,0.5]\times[-1,1]$. We take $\delta
q_1=\delta q_2=\delta p_1=\delta p_2=\delta y_1=\delta y_2=1/32$,
$N_{q}=32,\;N_{p}=8$ in FGA, and use $\delta x_1=\delta x_2=1/128$
to reconstruct the solution. Figure \ref{fig:ex3_amp} compares the
wave amplitude of the true solution and the one by FGA for
$\veps=1/128$ and $1/256$. The $\ell^\infty$ and $\ell^2$ errors of
the wave amplitude are $1.98\times10^{-1}$ and $4.42\times10^{-2}$
for $\veps=1/128$, and $1.07\times10^{-1}$ and $2.20\times10^{-2}$
for $\veps=1/256$. This shows a linear convergence in $\veps$ of the
method.


\begin{figure}[h t p]
\begin{tabular}{cc}
\resizebox{2.3in}{!}{\includegraphics{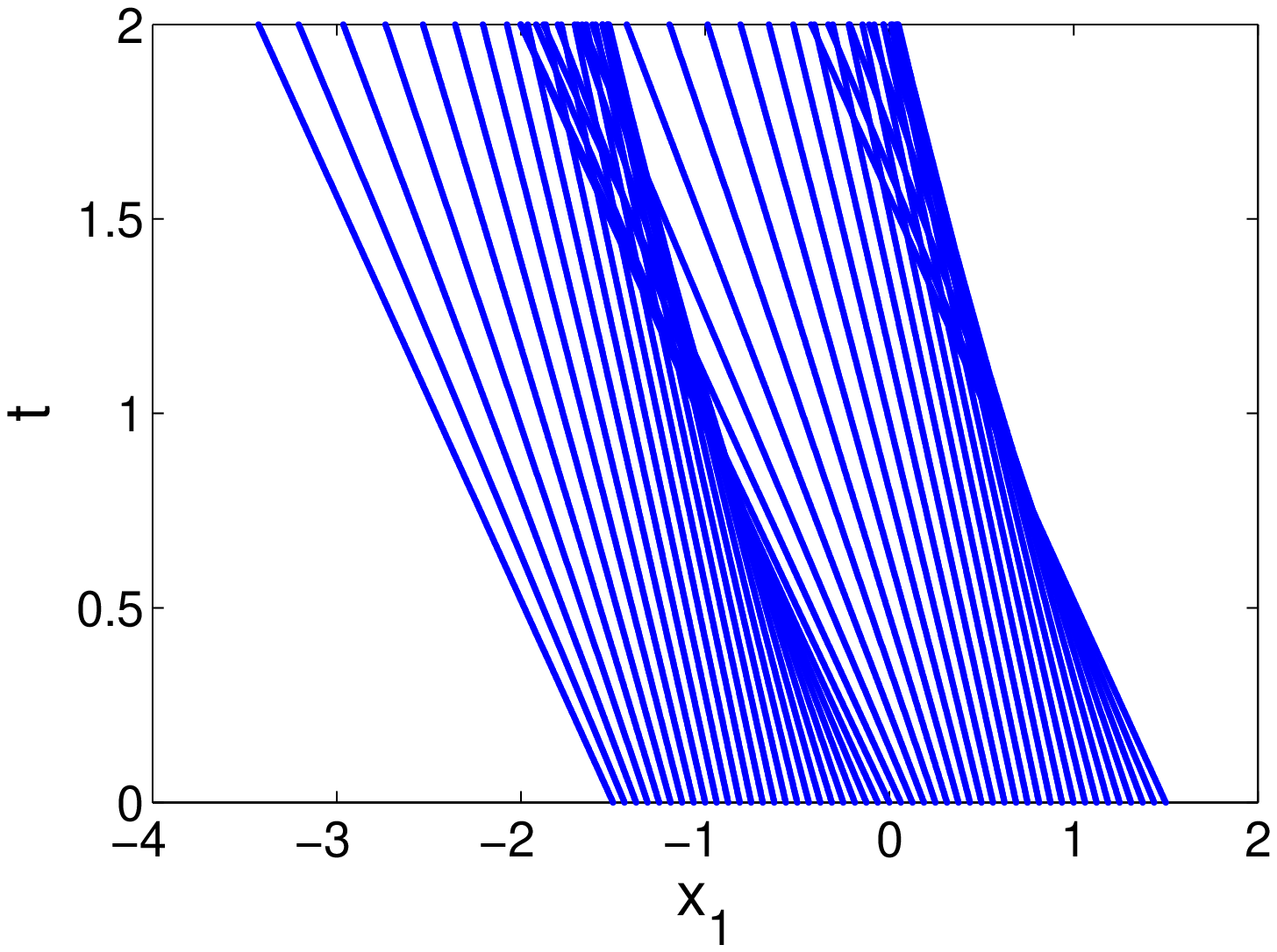}} &
\resizebox{2.3in}{!}{\includegraphics{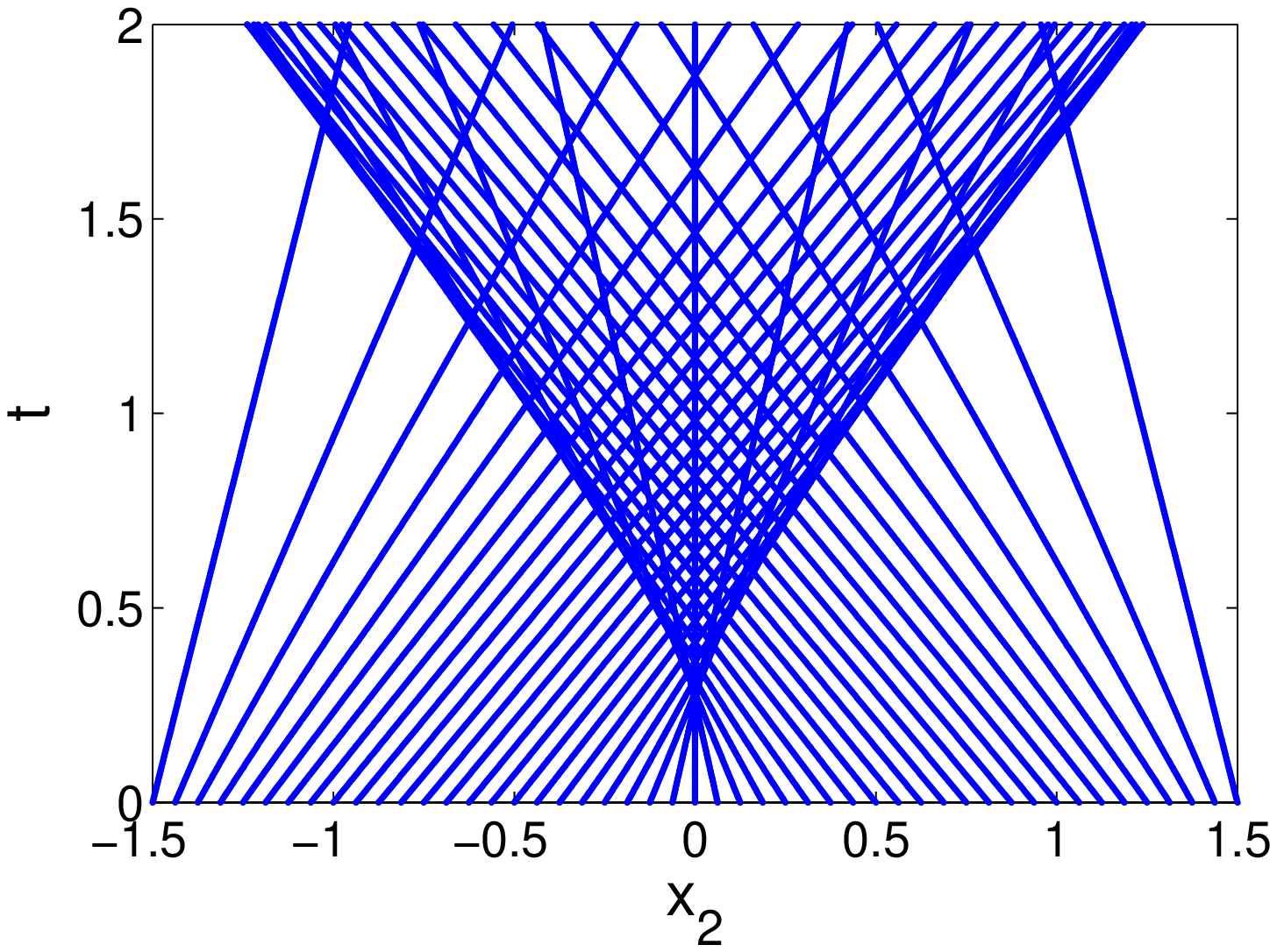}}
\end{tabular}
\caption{Example \ref{exa:3}, a set of the characteristic lines
develops the cusp caustic.}\label{fig:ex3_caustic}
\end{figure}

\begin{figure}[h t p]
\begin{tabular}{cc}
  \resizebox{2.3in}{!}{\includegraphics{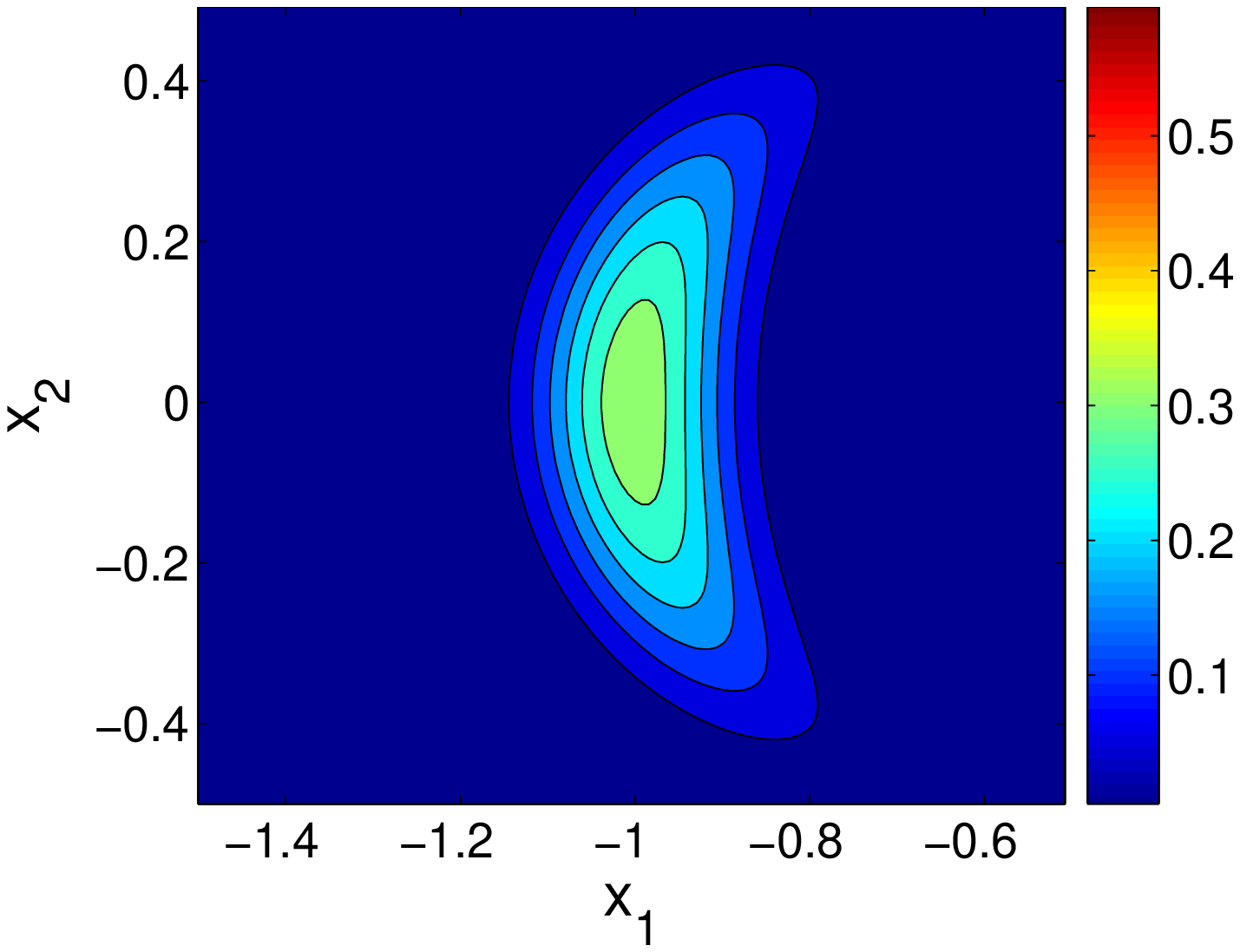}} &
  \resizebox{2.3in}{!}{\includegraphics{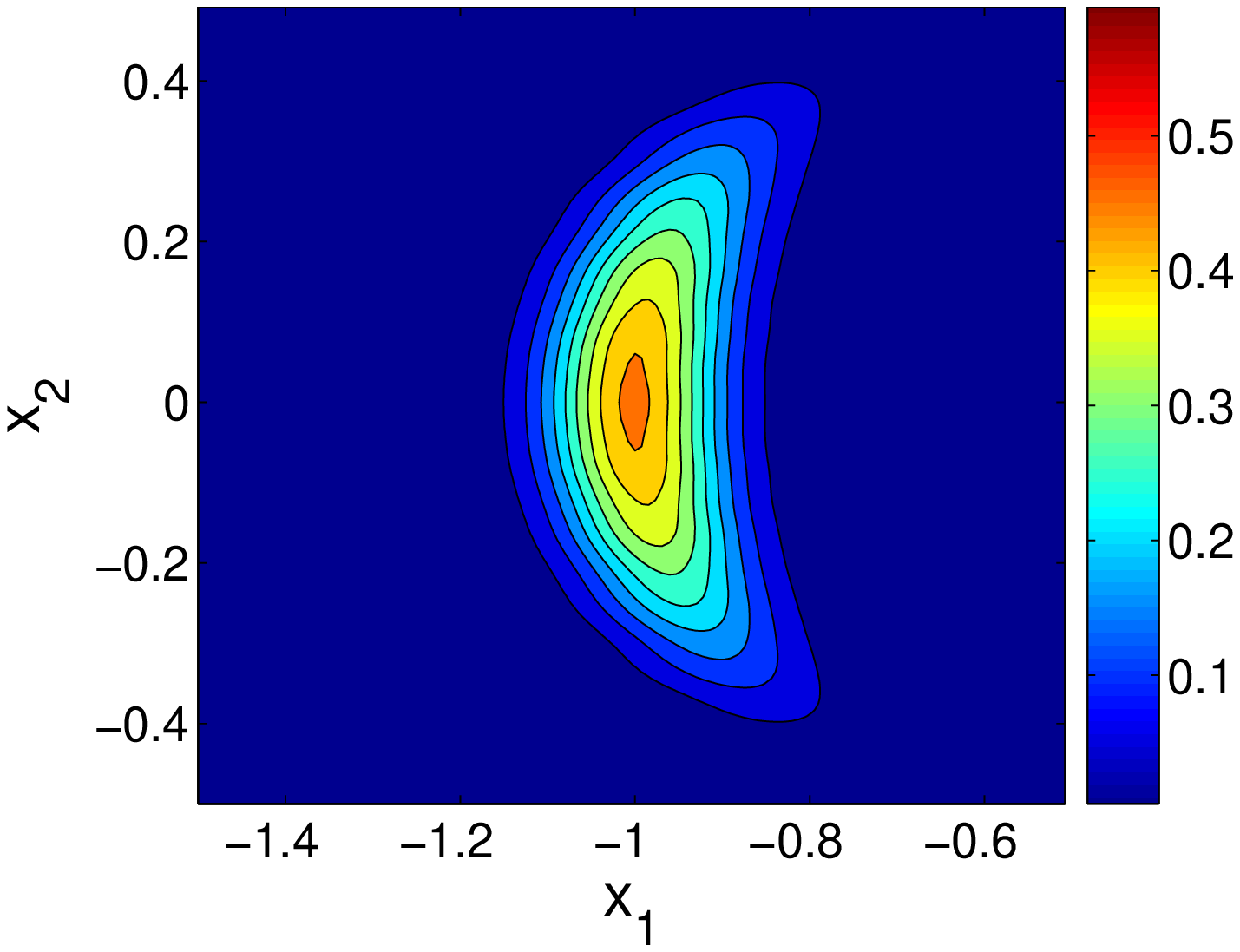}} \\
  \multicolumn{2}{c}{(a) Frozen Gaussian approximation} \\[3mm]
  \resizebox{2.3in}{!}{\includegraphics{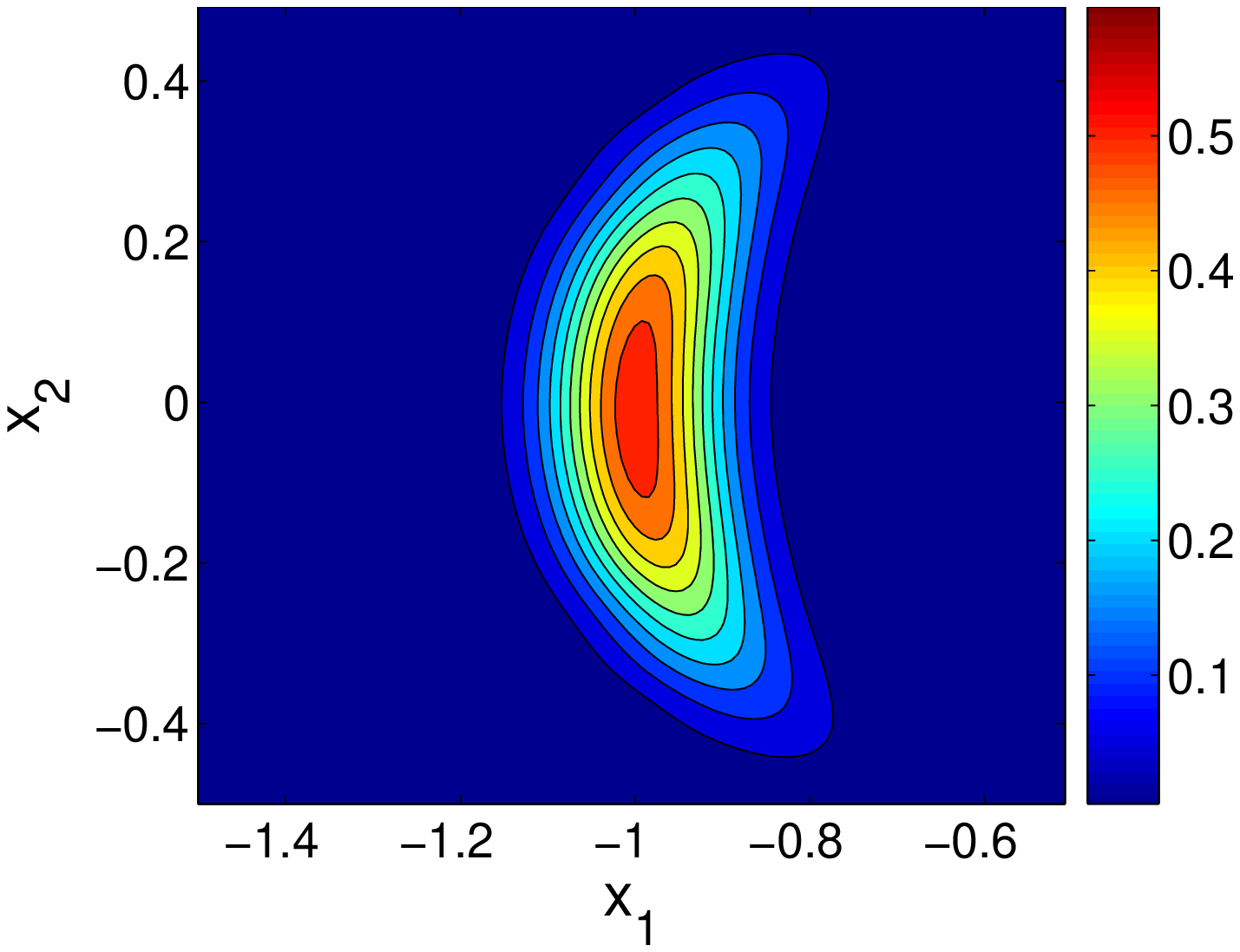}} &
  \resizebox{2.3in}{!}{\includegraphics{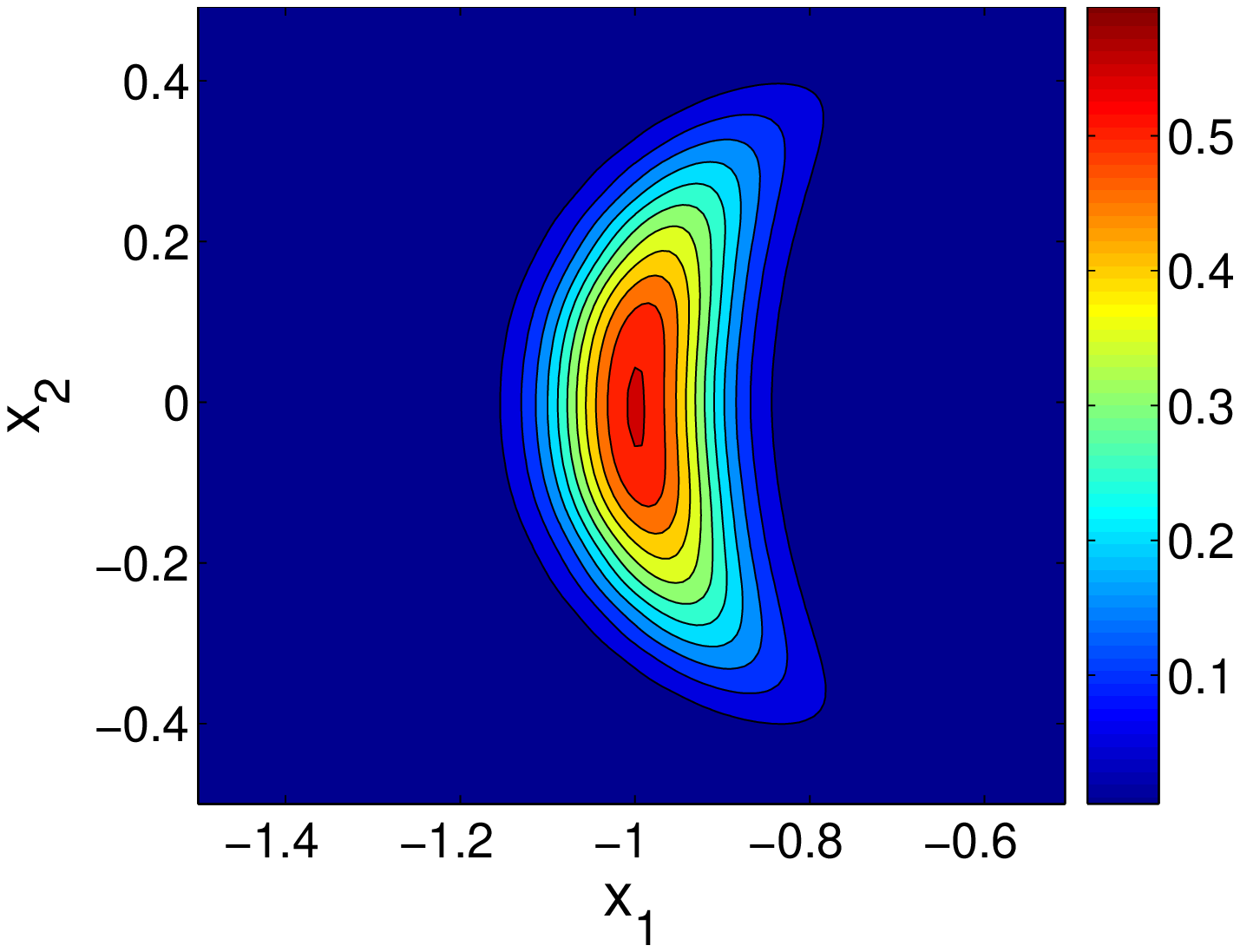}} \\
  \multicolumn{2}{c}{(b) True solution} \\[3mm]
  \resizebox{2.3in}{!}{\includegraphics{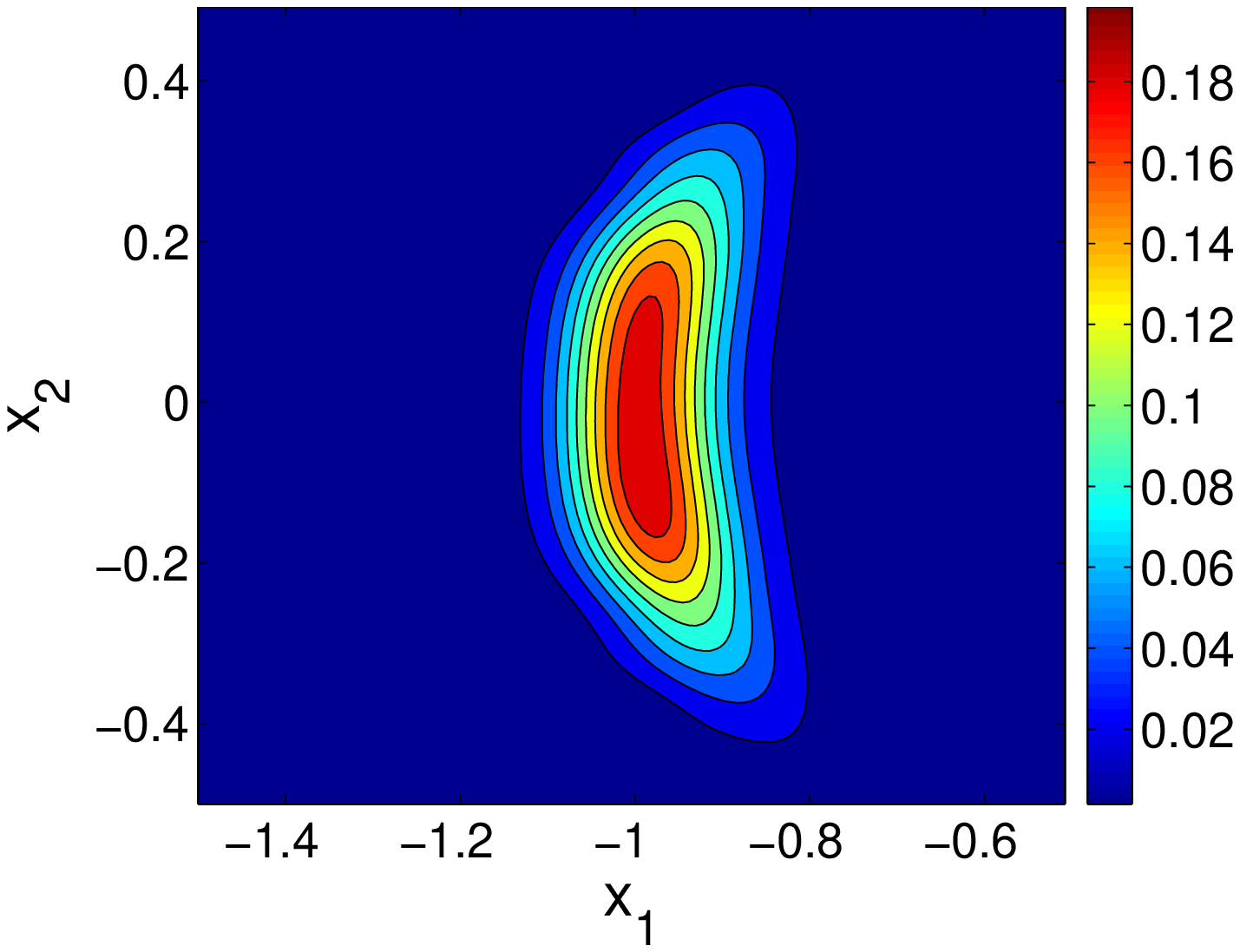}} &
  \resizebox{2.3in}{!}{\includegraphics{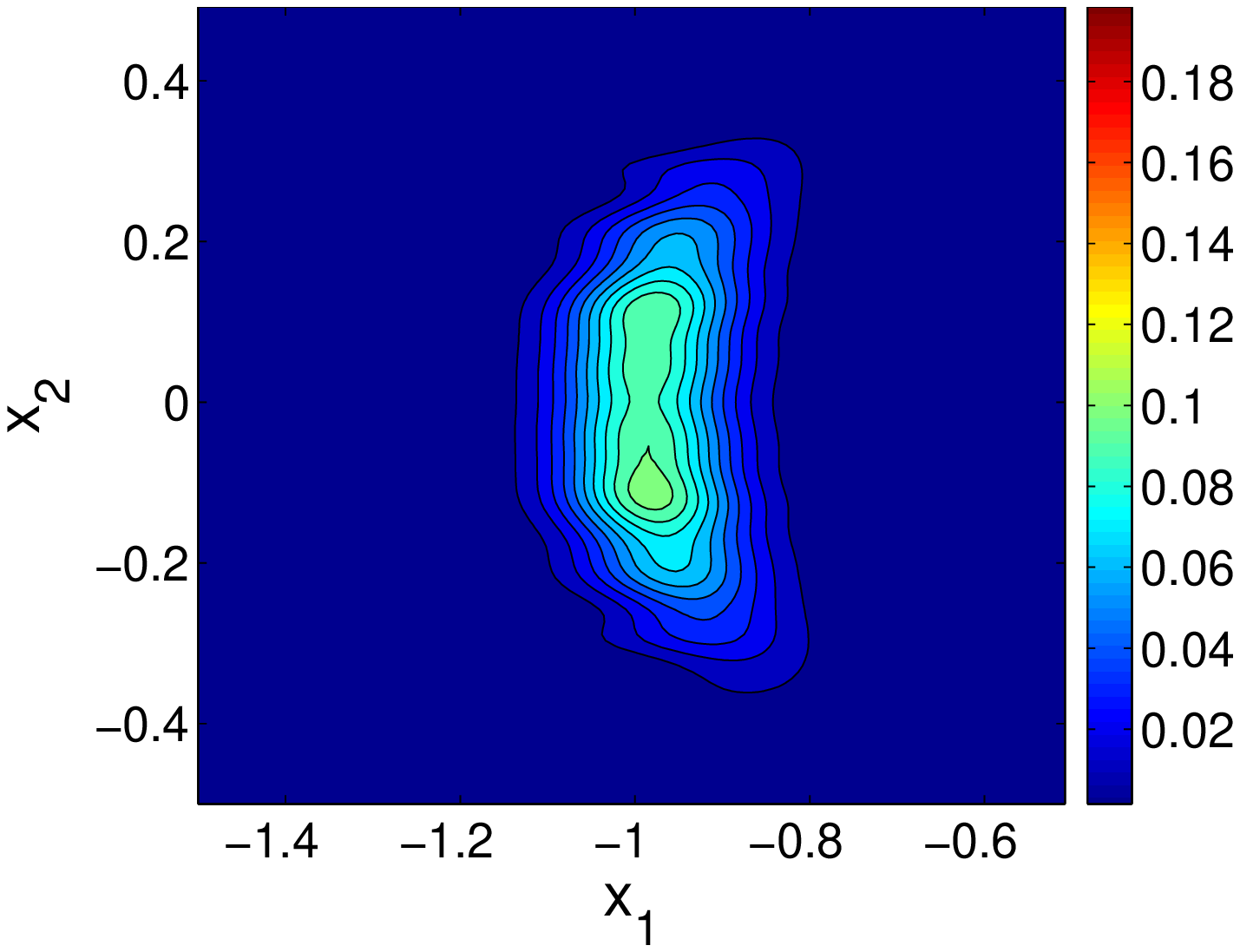}} \\
  \multicolumn{2}{c}{(c) Errors}
\end{tabular}
\caption{Example \ref{exa:3}, the comparison of the true solution
and the solution by FGA. Left: wave amplitude of
$\veps=\frac{1}{128}$;
  right: wave amplitude of $\veps=\frac{1}{256}$.}\label{fig:ex3_amp}
\end{figure}

\section{Discussion and Conclusion}\label{sec:conclusion}

We first briefly compare the efficiency of frozen Gaussian
approximation (FGA) with the Gaussian beam method (GBM). GBM uses only
one Gaussian function for each grid point in physical space, while FGA
requires more Gaussians per grid point with different initial momentum
to capture the behavior of focusing or spreading of the
solution. However, the stationary phase approximation suggests that
the number of Gaussians is only increased by a small constant multiple
of the number of those used in GBM. In addition, in GBM one has to
solve the Riccati equation, which is a coupled nonlinear ODE system in
high dimension, to get the dynamics of the Hessian matrix for each
Gaussian, while in FGA the Hessian matrix is determined initially and
has no dynamics.  Therefore, the overall efficiency of FGA is
comparable to GBM.

Admittedly, higher order GBM gives better asymptotic accuracy, and
only requires solving a constant number of additional ODEs as in FGA.
The numerical cost of higher order GBM is comparable to FGA. However, 
higher order GBM has its drawbacks: The imaginary part of higher order 
(larger than two) tensor function dose not preserve 
positive definiteness in time evolution, which may destroy the decay property of 
the ansatz of higher order GBM . This is even more severe when beams
spread. Moreover, the ODEs in higher order GBM are in the form of
coupled nonlinear system in high dimension.  It raises numerical
difficulty caused by stability issues. We also note that, the
numerical integration of ODEs in FGA can be easily parallelized since
the Hamiltonian flow \eqref{eq:characline} is independent for
different initial $(\bd{q},\bd{p})$, while it is not so trivial for
higher order tensors in GBM.

From the accuracy point of view, our numerical examples show that
first order FGA method has asymptotic accuracy $\Or(\veps)$. The
existing rigorous analysis (\cites{CoRo:97, BoAkAl:09, LiRuTa:10})
proves that the $k$-th order GBM has an accuracy of
$\Or(\veps^{k/2})$. Hence, at the first order, FGA has better
asymptotic accuracy than GBM. We note that, however, there has been
numerical evidence presenting $\Or(\veps)$ asymptotic accuracy order
for first order GBM, for example in
\cites{JiWuYa:08,JiWuYa:11,MoRu:app,LiRuTa:10}. This phenomenon is
usually attributed to error cancellation between different beams. To
the best of our knowledge, the mechanism of error cancellation in
GBM has not been systematically understood yet. 

With the the gain of halfth order in asymptotic accuracy due to
cancellation, the first order GBM has the same accuracy order as FGA
(of course GBM still loses accuracy when beams spread). Remark that
the gain in asymptotic accuracy order depends on the choice of
norm. For example, the first order GBM has a halfth order convergence
in $\ell^\infty$ norm, first order convergence in $\ell^2$ norm and
$3/2$-th order convergence in $\ell^1$ norm in Example $1$ of
\cite{JiWuYa:08}. Moreover, the error cancellation seems not to be
easily observed in numerics unless $\veps$ is very small. For
instance, the convergence order of GBM in Example \ref{exa:1} is only
a bit better than $1/2$ for $\veps$ up to $1/256$. While in FGA, we
numerically observe the first order asymptotic accuracy in both
$\ell^2$ and $\ell^{\infty}$ norms.

Actually the accuracy of FGA can also be understood from
a viewpoint of error cancellation. Note that the equalities \eqref{eq:con1}, 
\eqref{eq:con2} and \eqref{eq:con3} in Lemma \ref{lem:veps1} play
the role of determining the accuracy of FGA. In \eqref{eq:con1}, the
term $\bd{x}-\bd{Q}$ is of order $\Or(\sqrt{\veps})$ due to the
Gaussian factor, but after integration with respect to $\bd{q}$ and
$\bd{p}$, which is similar to the beam summation in GBM, it becomes
$\Or(\veps)$. Similar improvement of order also happens in
\eqref{eq:con2} and \eqref{eq:con3}. Integration by parts along with
\eqref{eq:dzPhi} explains the mechanism of this type of error
cancellation.

We conclude the paper as follows. In this work, we propose the
frozen Gaussian approximation (FGA) for computation of high
frequency wave propagation, motivated by the Herman-Kluk propagator
in chemistry literature. This method is based on asymptotic analysis
and constructs the solution using Gaussian functions with fixed
widths that live on the phase plane. It not only provides an
accurate asymptotic solution in the presence of caustics, but also
resolves the problem in the Gaussian beam method (GBM) when beams
spread. These merits are justified by numerical examples.
Additionally, numerical examples also show that FGA exhibits better
asymptotic accuracy than GBM. These advantages make FGA quite
competitive for computing high frequency wave propagation.

For the purpose of presenting the idea simply and clearly, we only
describe the method for the linear scalar wave equation using leading
order approximation. The method can be generalized for solving other
hyperbolic equations and systems with a character of high
frequency. The higher order approximation can also be derived.  Since
the method is of Lagrangian type, the issue of divergence still
remains, which will be resolved in an Eulerian framework. We present
these results in the subsequent paper \cite{LuYang:MMS}.

\FloatBarrier

\bibliographystyle{amsalpha}
\bibliography{hermankluk}

\end{document}